\documentclass[11pt,reqno]{amsart}
\usepackage[T1]{fontenc}
\usepackage{lmodern,microtype}
\usepackage{amsthm,amsmath,amsfonts,amssymb,mathtools,booktabs}
\usepackage[a4paper,margin=27mm]{geometry}
\usepackage[numbers,sort&compress]{natbib}
\usepackage{enumitem}
\setlist[enumerate]{label=\textnormal{(\roman*)},leftmargin=2em,itemsep=3pt}
\usepackage[hidelinks]{hyperref}
\hypersetup{pdftitle={Finite-state counterexamples to Ross's second queueing conjecture},pdfauthor={Yitzchak Shmalo}}
\numberwithin{equation}{section}
\theoremstyle{plain}
\newtheorem{theorem}{Theorem}[section]
\newtheorem{proposition}[theorem]{Proposition}
\newtheorem{lemma}[theorem]{Lemma}
\newtheorem{corollary}[theorem]{Corollary}
\theoremstyle{definition}
\newtheorem{remark}[theorem]{Remark}
\newcommand{\E}{\mathbb E}
\newcommand{\Pp}{\mathbb P}
\newcommand{\R}{\mathbb R}
\newcommand{\Z}{\mathbb Z}
\newcommand{\1}{\mathbf 1}
\newcommand{\ii}{\mathrm i}
\newcommand{\eps}{\varepsilon}
\DeclareMathOperator{\Var}{Var}
\DeclareMathOperator{\Cov}{Cov}
\DeclareMathOperator{\Unif}{Unif}
\DeclareMathOperator{\Rea}{Re}
\title[Counterexamples to Ross's second queueing conjecture]{Finite-state counterexamples to Ross's second queueing conjecture}
\author{Yitzchak Shmalo}
\address{Einstein Institute of Mathematics, The Hebrew University of Jerusalem, Jerusalem, Israel}
\email{Yitzchak.Shmalo@mail.huji.ac.il}
\date{19 September 2026}
\subjclass[2020]{Primary 60K25; Secondary 60G55, 60J27, 60E15}
\keywords{Ross's second conjecture, Cox process, Markov modulation, stationary workload, light traffic, stochastic comparison}
\begin{document}
\begin{abstract}
We construct finite-state Markov arrival-rate processes for which faster modulation increases the mean stationary workload of a stable single-server queue, disproving the universal ordering in Leskel\"a's formulation of Ross's second conjecture. A 64-state example has positive transition rates between every pair of states, independent service times uniform on $[0.99,1.01]$, and an explicit positive workload gap. A 128-state example additionally has strictly positive arrival-rate autocovariance at every lag. More generally, finite-state examples can exhibit any prescribed finite number of separated workload rises and falls. The proofs use a second-order expansion with a nonnegative, speed-uniform remainder under only a finite second service moment. We characterize the service distributions for which the second-order workload coefficient is nonincreasing for every bounded stationary environment: the function $s\mapsto\E(1-\cos(sB))/s^2$ must be nonincreasing. Mixtures of exponential service distributions satisfy this criterion. An exact Palm identity transfers the workload reversals to customer waiting times.
\end{abstract}
\maketitle

\section{Introduction and main results}\label{sec:intro}
Consider a rate-one, work-conserving single-server queue. Conditional on a stationary nonnegative process $F=(F(t))_{t\in\R}$, its arrivals form a Poisson process with intensity
\begin{equation}\label{eq:arrival}
 \lambda_{\eps,c}(t)=\eps F(ct),\qquad \eps>0,\quad c>0.
\end{equation}
Service requirements are independent and identically distributed, independent of the environment and of the randomness generating arrivals. The parameter $\eps$ controls traffic, whereas $c$ changes only the time scale of the environment. Write $w_\eps(c)$ for the mean workload at an ordinary time in the stationary queue.

Ross \cite{Ross78} formulated two conjectures concerning queues with time-varying Poisson input. In the stationary Cox formulation, the first compares the queue with homogeneous Poisson input of the same mean rate; Rolski \cite{Rolski81} proved that comparison. The second concerns acceleration of the environment. In the journal version of his note, Leskel\"a \cite[Problem~1, p.~214]{Leskela22} asks whether
\begin{equation}\label{eq:ross}
 c_1\le c_2\quad\Longrightarrow\quad w_\eps(c_1)\ge w_\eps(c_2)
\end{equation}
holds for every stationary ergodic arrival-rate process, and, if not, which condition on that process is necessary and sufficient. We answer the universal assertion negatively. We also identify an exact criterion for its second-order light-traffic version, but not for the all-traffic ordering.

\subsection{An explicit counterexample}
Let $P_m$ be the clockwise cyclic shift on $\Z/m\Z$ and let $\Pi_m$ be averaging with respect to the uniform distribution. For $m\ge3$ and $\eta>0$, set
\begin{equation}\label{eq:generator}
 Q_{m,\eta}=\frac{P_m-I}{\sin(2\pi/m)}+\eta(\Pi_m-I),
 \qquad
 g_m(j)=2+\cos\left(\frac{2\pi j}{m}+\frac{\pi}{2m}\right).
\end{equation}
Here $P_mh(j)=h(j+1)$. If $X$ is the stationary chain with generator $Q_{m,\eta}$, then $F(t)=g_m(X_t)$ is itself a finite-state Markov chain: $g_m$ is injective. Its invariant distribution is uniform, $1<F<3$, and $\E F=2$. All off-diagonal transition rates are positive. The shift-plus-refresh form will make its covariance explicit.

\begin{theorem}[64 states and continuous service]\label{thm:64}
In \eqref{eq:generator}, take $m=64$ and $\eta=1/2048$. Use \eqref{eq:arrival} and independent service times
\begin{equation}\label{eq:uniformservice}
 B\sim\Unif[99/100,101/100].
\end{equation}
For every $0<\eps\le10^{-4}$, the stationary workload has a finite exponential moment and
\begin{equation}\label{eq:64main}
 w_\eps(3\pi)-w_\eps(2\pi)>\frac{\eps^2}{600}>0.
\end{equation}
The arrival-rate process is stationary, ergodic and Markov on 64 distinct positive values, and every off-diagonal transition rate is positive.
\end{theorem}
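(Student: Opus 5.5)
The plan is to combine a light-traffic expansion of $w_\eps(c)$ to second order in $\eps$, with a remainder controlled uniformly in $c$, with an explicit evaluation of the second-order coefficient for the cyclic chain \eqref{eq:generator}. The structural claims come first and are routine. $g_{64}$ is injective on $\Z/64\Z$, so $F=g_{64}(X)$ is Markov on $64$ distinct values in $(1,3)$. Every off-diagonal entry of $Q_{64,\eta}$ is at least $\eta/64>0$, which gives irreducibility, a uniform invariant law, stationarity and ergodicity. Since $F<3$ and $B\le 1.01$, the load is at most $3.03\,\eps\ll1$. For the exponential moment, I would write the workload via Loynes' construction as $\sup_{t\ge0}(A(-t,0)-t)$, where $A$ is the arrived work. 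Conditional on $F$, $A$ is compound Poisson with rate at most $3\eps$. The Chernoff bound $\E e^{\theta(A(-t,0)-t)}\le\exp\bigl(t(3\eps(\E e^{\theta B}-1)-\theta)\bigr)$ is exponentially decaying for a fixed small $\theta>0$, and a union bound over integer $t$ then gives $\E e^{\theta W}<\infty$.

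The main step is the expansion. Decompose the stationary workload at time $0$ into the residual work of each earlier arrival, and use that customers interact only when one arrives during the residual work left by another. This should give
\begin{equation*}
 w_\eps(c)=\eps\,\E F\,\frac{\E B^2}{2}+\eps^2\Bigl(\kappa_0+\E B\int_0^\infty \rho(ct)\,h(t)\,dt\Bigr)+r_\eps(c),
\end{equation*}
where $h(t)=\E(B-t)^+$, $\rho(t)=\Cov(F(0),F(t))$, and $\kappa_0$ does not depend on $c$. I would then prove $0\le r_\eps(c)\le C\eps^3$ with $C$ depending only on $\sup F\le3$ and the moments of $B$, and not on $c$. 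The idea is to compare the true workload with the ``at most two-customer interaction'' truncation, which is monotone. Here the bounded support of $B$ and $\eps\le10^{-4}$ allow a crude constant. Since the first-order term and $\kappa_0$ cancel in the difference, this reduces \eqref{eq:64main} to
\begin{equation*}
 \E B\int_0^\infty\bigl(\rho(3\pi t)-\rho(2\pi t)\bigr)h(t)\,dt > \frac1{600}+2C\eps .
\end{equation*}

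For the covariance, note that $g_m-2$ is the real part of a Fourier mode $e^{2\pi\ii j/m}$, up to a phase. Such a mode is an eigenvector of $P_m$ with eigenvalue $e^{2\pi\ii/m}$ and is annihilated by $\Pi_m$. Its eigenvalue under $Q_{m,\eta}$ is therefore $\bigl(e^{2\pi\ii/m}-1\bigr)/\sin(2\pi/m)-\eta=-\tan(\pi/m)-\eta+\ii$. Hence
\begin{equation*}
 \rho(t)=\tfrac12 e^{-\delta t}\cos t,\qquad \delta=\tan(\pi/64)+1/2048\approx0.0496 .
\end{equation*}
For $B$ concentrated near $1$ we have $h(t)\approx(1-t)^+$, and $\int_0^1\cos(ct)(1-t)\,dt=(1-\cos c)/c^2$. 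This equals $0$ at $c=2\pi$ and $2/(9\pi^2)$ at $c=3\pi$. The undamped main term is therefore about $\tfrac12\cdot 2/(9\pi^2)\approx0.0113$.

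What remains is to bound rigorously the perturbations coming from the damping $e^{-\delta ct}$ (of order $\delta$ relative to the main term, at most a few percent), from the spread of $B$ over $[0.99,1.01]$ (of order $10^{-2}$), and from $\E B\ne1$. All of these are explicit elementary integrals, and the margin $0.0113$ versus $1/600\approx0.0017$ is comfortable. I expect the main obstacle to be the expansion itself: specifically, proving that the remainder is nonnegative and bounded by $C\eps^3$ uniformly in the speed $c$. Everything else is explicit computation.
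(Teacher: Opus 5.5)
Your outline is right, and several pieces are correct. These are the covariance $\tfrac12e^{-\delta|t|}\cos t$ with $\delta=\tan(\pi/64)+1/2048$, your second-order coefficient (it equals the paper's $A_B(c)$ with $\kappa_0=a^2b_1b_2/2$), and the Chernoff/union-bound proof of the exponential moment, which can replace the paper's supermartingale argument.

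\textbf{The remainder bound is missing.} The step that carries the theorem is a remainder that is nonnegative, $O(\eps^3)$ and has an explicit constant. You leave it open, and the mechanism you suggest fails pathwise. Write $S=\sum_i(B_i+T_i)_+$ for the parallel-service workload and $D=V-S$. Three unit jobs arriving together at age $y\in(0,1)$ give $D=2y$, but the three pairwise excesses sum to $3y$. By continuity, nearly simultaneous triples make the ``two-customer truncation'' exceed $V$ with positive probability, so no pathwise monotone comparison yields $r_\eps(c)\ge0$.

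The paper's missing idea is to expand $\E[\lambda V]$ instead of $\E V$. The quadratic rate-conservation identity $w_\eps(c)=\eps ab_2/2+b_1\E[\lambda_{\eps,c}(0)V(0)]$ already carries one factor of $\eps$, so only a first-order pathwise comparison is needed. Substituting $V=S+D$ gives the second-order term exactly plus $R_{\eps,c}=b_1\E[\lambda_{\eps,c}(0)D]$. Here $D\ge0$ and $D$ is nondecreasing under insertion of arrivals, so $D(A)\le D(\widehat A)$ for the dominating rate-$\eps M$ Poisson input. Pollaczek--Khinchine then gives $\E D(\widehat A)$ exactly, whence $0\le R_{\eps,c}\le\eps^3M^3b_1^2b_2/(2(1-\eps Mb_1))$.

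The constant matters. At $\eps=10^{-4}$ the slack above $1/600$ is only about $0.0027$. Your two-sided $2C\eps$ therefore needs $C\lesssim13.6$, while the Poisson-domination constant is about $13.5$. Even with the sign, $C\lesssim27$ is required, so a ``crude constant'' will not do.

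\textbf{Your damping estimate is wrong, and a perturbative treatment fails.} The damping factor is $e^{-\delta ct}$ with $\delta c\approx0.31$ and $0.47$, not $\delta$. The linear correction to the gap is already $-\delta/(2\pi)\approx-0.0079$, about $70\%$ of the undamped value $1/(9\pi^2)$. The true gap is about $0.0044$, not ``$0.0113$ minus a few percent''.

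Put $I_d(c)=\int_0^1(1-u)e^{-dcu}\cos(cu)\,du$. The natural bound $|I_d(c)-I_0(c)|\le dc/6$ costs about $0.065$ in total, more than the main term, and loses the sign. You need the closed form
$I_d(k\pi)=\frac{d}{k\pi(1+d^2)}+\frac{(1-d^2)(1-(-1)^ke^{-k\pi d})}{k^2\pi^2(1+d^2)^2}$.
You then need a certified lower bound on $\tfrac12(I_d(3\pi)-I_d(2\pi))$ for $d\le1/20$ (the paper gets $>1/320$), and a certified bound $\delta<1/20$.

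By contrast, the service spread is nearly free. It costs at most $\Var(B)/2=1/60000$, because $\E(B-t)_+=1-t$ for $t\le0.99$. Also $\E B=1$ exactly, so there is no ``$\E B\ne1$'' correction.
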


Thus the failure of \eqref{eq:ross} is not caused by nonstationary initialization, varying mean offered load, a degenerate service law, or forbidden transitions. The mean input work is $2\eps$ at both speeds. The lower bound in \eqref{eq:64main} is obtained by rational inequalities, not by estimating stationary queues through simulation.

The cyclic covariance oscillates in sign. The next result removes negative autocovariance as a necessary feature of the counterexample.
\begin{theorem}[Positive autocovariance at every lag]\label{thm:positive}
There is an explicit stationary irreducible 128-state Markov intensity $F_+$ with $1<F_+<5$, mean three and positive transition rates between every pair of states, such that
\begin{equation}\label{eq:poscovintro}
 \Cov(F_+(0),F_+(t))>0\qquad(t\in\R).
\end{equation}
With service law \eqref{eq:uniformservice}, its stationary workload mean satisfies
\begin{equation}\label{eq:positivegap}
 w^+_\eps(3\pi)-w^+_\eps(2\pi)>\frac{\eps^2}{500}>0,
 \qquad 0<\eps\le10^{-5}.
\end{equation}
The stationary workload has a finite exponential moment.
\end{theorem}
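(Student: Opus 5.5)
Theorem~\ref{thm:positive} comes down to three things: construct $F_+$, verify positive covariance at all lags, and prove the workload gap through the second-order light-traffic expansion that the abstract announces. That expansion has the form
\begin{equation*}
 w_\eps(c)=\eps\,\frac{\E F\,\E B^2}{2}+\eps^2\,\Phi(c)+R_\eps(c),
 \qquad 0\le R_\eps(c)\le K\eps^3,
\end{equation*}
with $K$ uniform in $c$. The coefficient $\Phi(c)$ is the Pollaczek--Khinchine second-order term, and it splits into a homogeneous part plus a covariance part,
\begin{equation*}
 \Phi(c)=\Phi_0+\int_0^\infty r(t)\,\E\bigl[(B-t/c\ \text{overlap kernel})\bigr]\,dt .
\end{equation*}
In spectral form it reads $\int \widehat r(\omega)\,\E(1-\cos(\omega B/c))/\omega^2\,d\omega$ up to constants, where $r$ is the autocovariance of $F_+$. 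A reversal therefore requires a spectral measure concentrated where $s\mapsto\E(1-\cos(sB))/s^2$ increases. For $B$ close to $1$, this kernel is nearly $(1-\cos s)/s^2$, which increases just beyond $s=2\pi$.

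For the construction I would take $F_+=g_+(X)$ on a product chain, say $X=(Y,Z)$ with $Y$ the 64-state chain of Theorem~\ref{thm:64} and $Z$ a two-state (or auxiliary) chain. Alternatively, $F_+$ could be the sum of the 64-state cyclic intensity $g_{64}$ and an independent slow two-state component with large amplitude, encoded on 128 distinct values so that $F_+$ stays Markov. I would perturb with a small global refresh rate so that all off-diagonal rates are positive. The covariance is then the sum of the two covariances. The cyclic term is $\tfrac12\cos(\cdot)$ times $e^{-\eta t}$-type oscillations, explicitly computable from the shift-plus-refresh form and bounded in absolute value by $\tfrac12 e^{-\eta t}$ times a constant. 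The slow component contributes $a^2e^{-\theta t}$ with $\theta<\eta$ and $a^2$ larger than the cyclic amplitude, which gives \eqref{eq:poscovintro} for all $t$. The slow component has low frequency, where the kernel is nearly flat ($\approx \E B^2/2$), so its contribution to $\Phi(3\pi)-\Phi(2\pi)$ is small but negative. I would bound it by $O(\theta^2)$ via $\E B^2/2-\E(1-\cos sB)/s^2\le s^2\E B^4/24$. The cyclic component's positive gain is then taken from the computation in Theorem~\ref{thm:64}. Choosing $\theta$ small and the amplitude moderate, while keeping $1<F_+<5$ and mean $3$, should give $\Phi(3\pi)-\Phi(2\pi)>1/400$ by rational inequalities. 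Subtracting the remainder bound $K\eps^3\le K\cdot 10^{-5}\eps^2$ then yields $\eps^2/500$.

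The finite exponential moment follows from boundedness: with $\eps F_+\le5\eps$ and $B\le 1.01$, the queue is dominated by an M/G/1 queue of rate $5\eps<1$ with bounded service, which has a finite exponential moment. Ergodicity and stationarity follow from irreducibility of the finite chain.

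I expect the main obstacle to be the quantitative balance. The slow positive-covariance component must dominate the oscillating tail at every lag, including the transient regime at small $t$ where the cyclic cosine is negative. At the same time it must not erase the spectral gain, and the remainder constant $K$ must remain explicit enough to absorb with the factor $10^{-5}$. I would first try $\theta=\eta/2$ and an amplitude just exceeding the cyclic one. I would check positivity by writing $r(t)=e^{-\theta t}\bigl(a^2-\tfrac12 e^{-(\eta-\theta)t}|\cos|\bigr)$, and adjust $\eta$ if the rational bounds fail.
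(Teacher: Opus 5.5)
Your construction and positivity argument are the paper's: the $Q_{64,1/2048}$ cycle times an independent slow $\pm1$ flip, a uniform refresh on all 128 states, and $F_+=3+z+\cos(\cdot)$. Then $K_+(t)\ge(a^2-\tfrac12)e^{-\theta|t|}$ as soon as the slow decay rate $\theta$ does not exceed the cyclic one. Your overall plan is also the paper's: cyclic endpoint gain, minus the slow-component loss, minus the service perturbation, then Corollary~\ref{cor:signtest}. The problem is the quantitative accounting, where one step fails as written.

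The slow component does not cost $O(\theta^2)$. Its spectral density $a^2\theta/(\pi(\theta^2+\omega^2))$ has infinite second moment, so integrating $H_B(0)-H_B(s)\le s^2\E B^4/24$ against it bounds nothing. The loss is genuinely first order; for unit service
\[
 a^2\int_0^1(1-u)\bigl(e^{-2\pi\theta u}-e^{-3\pi\theta u}\bigr)\,du=\frac{a^2\pi\theta}{6}+O(\theta^2).
\]
So $\theta$ must be small in absolute terms, not merely below the cyclic decay rate. That decay rate is $d_*=\tan(\pi/64)+1/2048\approx0.0496$, not the refresh parameter $1/2048$. If your first try $\theta=\eta/2$ means $\theta\approx d_*/2$, then with $a^2\approx\tfrac12$ the slow loss is about $0.0059$. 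This exceeds the entire cyclic gain $G(d_*)\approx0.0044$, so the coefficient gap has the wrong sign. The repair is the time-domain bound $1-e^{-x}\le x$, which caps the loss at $a^2\pi\theta/6$. The paper takes $a=1$ and $\theta=\alpha+\zeta=11/100000$, giving a loss below $121/2100000$.

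Your budget is also too tight. A coefficient gap above $1/400$ does not suffice. With $M=5$, $b_2=30001/30000$ and $\eps\le10^{-5}$, the remainder costs $125b_2\eps/(2(1-5\eps))\approx6.25\times10^{-4}$, and $1/400-6.25\times10^{-4}=0.001875<1/500$. You need a coefficient gap above roughly $0.00263$.

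The paper obtains this from the certified cyclic bound $1/320$. That bound still applies because the refreshed damping $d_*+\zeta$ stays below $1/20$, which requires the refresh rate to be below about $3.4\times10^{-4}$. The paper then subtracts:
\begin{enumerate}
\item $121/2100000$ for the slow term;
\item $1/40000$ for the service change, from Lemma~\ref{lem:service} with $L_C=3/2$;
\item $1/1500$ for the remainder.
\end{enumerate}
This leaves $1663/700000>1/500$.
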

The complete generator and the covariance appear in Section~\ref{sec:positive}. Positive autocovariance is much weaker than an ordering of all finite-dimensional distributions; no association or supermodular ordering is asserted in Theorem~\ref{thm:positive}.

\subsection{Uniform expansion and structural consequences}
For a bounded stationary environment $0\le F\le M$ and a nonnegative service law with $b_1=\E B>0$ and $b_2=\E B^2<\infty$, define
\begin{equation}\label{eq:kernelintro}
 k_B(u)=\E(B-u)_+,
 \qquad
 A_B(c)=b_1\int_0^\infty k_B(u)\E[F(0)F(-cu)]\,du.
\end{equation}
Theorem~\ref{thm:expansion} proves the exact decomposition
\begin{equation}\label{eq:introexp}
 w_\eps(c)=\frac{\eps(\E F)b_2}{2}+\eps^2A_B(c)+R_{\eps,c},
 \qquad
 0\le R_{\eps,c}\le
 \frac{\eps^3M^3b_1^2b_2}{2(1-\eps Mb_1)},
\end{equation}
whenever $\eps Mb_1<1$. The error bound is uniform in $c$. Its proof compares the workload with the sum of the residual services that would remain under immediate parallel service. The difference of these two workloads is nonnegative and increases under insertion of a marked arrival.

The expansion also yields two structural conclusions. First, Theorem~\ref{thm:many} constructs, for any prescribed $N$, one finite-state environment and one continuous service law with $N$ distinct peaks in an explicit finite sequence of sampled workload means. Each sampled peak exceeds both its adjacent sampled values by a quantitative amount. Second, define
\begin{equation}\label{eq:Hintro}
 H_B(s)=\frac{\E(1-\cos(sB))}{s^2}\quad(s>0),
 \qquad H_B(0)=\frac{b_2}{2}.
\end{equation}
Theorem~\ref{thm:servicecriterion} proves that $A_B$ is nonincreasing for every bounded, mean-square-continuous stationary environment if and only if $H_B$ is nonincreasing. Testing only finite-state Markov intensities with all off-diagonal rates positive already gives the same equivalence. Any strict violation of this service criterion produces an actual workload reversal at sufficiently small positive traffic. This is a characterization of a uniform second-order comparison, not an all-traffic classification.

\subsection{Relation to the literature}
The exact formulation matters. Rolski's lower and upper comparisons \cite{Rolski81,Rolski86}, Heyman's discussion of Ross's original formulations \cite{Heyman82}, and the Ross-type comparisons of Miyoshi and Rolski \cite{MR04} belong to the history of this problem. The assertion addressed here is the ordering between two finite positive speeds in the stationary Cox formulation of Leskel\"a \cite{Leskela22}. We do not equate that assertion with comparison against homogeneous input, a frozen environment, or a transient average. Throughout the paper the service law is independent of the environment and the server works at a fixed rate.

Positive results under stronger dependence assumptions include the Markov-modulated comparisons of Chang, Chao and Pinedo \cite{CCP91} and B\"auerle and Rolski \cite{BR98}. The latter use doubly stochastically monotone environments. Dependence comparisons for stationary Markov processes and regularity based on directional convexity are developed in \cite{HuPan00,MS93}; related Markov orderings and stochastic relations appear in \cite{Massey87,Leskela10}. General treatments of these orders are \cite{MullerStoyan02,ShakedShanthikumar07}. Coupling and fixed-marginal comparison have classical foundations in \cite{Strassen65,Tchen80}. At the point-process level, comparison of counting processes and queues \cite{Whitt81} and directionally convex ordering of random measures and shot noise \cite{BY09} provide relevant frameworks. Our covariance calculation does not replace any of these stronger process-order hypotheses.

The use of light traffic is classical. Single-server approximations were developed by Daley and Rolski \cite{DR84,DR91,DR94}; general open-system expansions by Reiman and Simon \cite{RS89}; and factorial-moment and Markov-modulated expansions by B{\l}aszczyszyn and collaborators \cite{Blasz95,BFS95}. Last \cite{Last14} treats perturbations of Poisson processes in a general framework. We do not claim the principle of expanding in arrival rate as new. Here the useful feature is a nonnegative remainder with an elementary bound independent of modulation speed and valid under a finite second service moment. It turns a covariance computation into a strict finite-traffic inequality.

Cox input originates in \cite{Cox55}; point-process and random-measure foundations, including marking, thinning and Palm measures, are treated in \cite{DVJ03,DVJ08,Kallenberg17,LP18}. Markovian input models and their queueing applications include \cite{Neuts79,YN71,HL86}. Gupta, Harchol-Balter, Scheller-Wolf and Yechiali \cite{GHSY06} investigate fluctuating-load queues, including models with modulated service. Such models must be distinguished from the fixed-rate server and independent service requirements used here. Mixtures of exponentials, used below as a service class with a positive second-order comparison, also have a substantial approximation literature \cite{Whitt84}.

The stationary construction follows the reflected-input perspective of Lindley and Loynes \cite{Lindley52,Loynes62}; standard queueing background is \cite{Asmussen03}. We give the required moment and remainder arguments directly. The distinction between time and customer averages is classical \cite{Brumelle71,HS80}; it is especially important here because the Poisson-arrivals-see-time-averages principle \cite{Wolff82} cannot simply be applied to Cox input. Section~\ref{sec:palm} instead proves the required identity by marked compensation, consistent with Palm and martingale treatments \cite{Bremaud81,BB03} and rate-conservation methods \cite{Miyazawa94}.

The paper is organized as follows. Section~\ref{sec:expansion} proves the expansion and its finite-second-moment extension. Sections~\ref{sec:cycle} and \ref{sec:positive} prove the explicit counterexamples, and Section~\ref{sec:many} gives arbitrarily many sampled reversals. Section~\ref{sec:service} proves the service-law characterization; Section~\ref{sec:variance} discusses averaged-environment variance and reversible chains. Section~\ref{sec:palm} transfers the conclusions to customer waiting time.

\section{A uniform workload expansion}\label{sec:expansion}
We first work with an arbitrary jointly measurable stationary environment satisfying
\begin{equation}\label{eq:generalF}
 0\le F(t)\le M,\qquad a=\E F(0)>0.
\end{equation}
Let $B$ be a nonnegative service requirement with $\E B^2<\infty$, independent of all arrival and environmental randomness, and write
\begin{equation}\label{eq:moments}
 b_1=\E B>0,\qquad b_2=\E B^2,
 \qquad k_B(u)=\E(B-u)_+\quad(u\ge0).
\end{equation}
No Markov or mixing assumption is needed for the expansion. We assume the stronger, convenient stability condition
\begin{equation}\label{eq:domstability}
 \eps M b_1<1.
\end{equation}
All workloads in this section refer to the stationary version constructed from the infinite past.

\begin{theorem}[Uniform expansion under a finite second moment]\label{thm:expansion}
Under \eqref{eq:generalF}--\eqref{eq:domstability}, for every $c>0$,
\begin{equation}\label{eq:expansion}
 w_\eps(c)=\frac{\eps a b_2}{2}
 +\eps^2 b_1\int_0^\infty k_B(u)\E[F(0)F(-cu)]\,du+R_{\eps,c},
\end{equation}
where
\begin{equation}\label{eq:remainder}
 0\le R_{\eps,c}\le
 \frac{\eps^3 M^3 b_1^2 b_2}{2(1-\eps M b_1)}.
\end{equation}
The bound is uniform over all $c>0$ and all stationary environments satisfying \eqref{eq:generalF}. In particular, for $B\equiv1$,
\begin{equation}\label{eq:unitexpansion}
 w_\eps(c)=\frac{\eps a}{2}
 +\eps^2\int_0^1(1-u)\E[F(0)F(-cu)]\,du+R_{\eps,c},
 \quad 0\le R_{\eps,c}\le\frac{\eps^3 M^3}{2(1-\eps M)}.
\end{equation}
\end{theorem}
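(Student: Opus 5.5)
We compare the stationary workload $V$ at time $0$ with the parallel-service workload
\[
P=\sum_{i:\,T_i\le0}(B_i+T_i)_+ .
\]
Here $(T_i,B_i)$ are the arrival epochs and service marks of the stationary Cox process with intensity $\eps F(c\,\cdot)$ on $(-\infty,0]$. The remainder is defined through the decomposition
\[
R_{\eps,c}=\E V-\E P-\eps^2A_B(c)+\Bigl(\E P-\tfrac{\eps a b_2}{2}\Bigr),
\]
and the first step is to compute $\E P$ exactly. By the Campbell formula conditional on $F$, and Fubini,
\[
\E P=\eps a\int_0^\infty \E(B-s)_+\,ds=\frac{\eps a b_2}{2}.
\]
This accounts for the first term of \eqref{eq:expansion}. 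It remains to show that $D=V-P\ge0$ and that
\[
\E D=\eps^2 b_1\int_0^\infty k_B(u)\E[F(0)F(-cu)]\,du+R
\]
with $0\le R\le$ the bound in \eqref{eq:remainder}.

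\textbf{Structure of $D$.} Loynes' construction gives $V=\sup_{t\le0}\bigl(\sum_{t<T_i\le0}B_i-|t|\bigr)^+$. I will show that $D$ is nonnegative and nondecreasing under insertion of an extra marked point. Pathwise, $V$ with $n$ points minus $P$ is a single-server versus infinite-server comparison: at each time the single server depletes total work at rate at most $1$, while the parallel system depletes at rate equal to the number of busy jobs, which is at least $1$ whenever $V>0$. So $V\ge P$. Monotonicity under insertion uses the same rate argument: after adding a job $(s,b)$, $P$ increases by exactly $(b+s)_+$, while $V$ increases by at least that amount. Indeed, the added work is drained by the FIFO server no faster than by its own dedicated parallel server, which is a standard coupling (the increment of a Lindley-type workload is at least the residual of the inserted job). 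With this monotonicity, the Mecke/Slivnyak formula conditional on $F$ gives the exact identity
\[
\E D=\E\int_{-\infty}^0 \eps F(cs)\,\E\bigl[\Delta_{(s,B)}D\bigr]\,ds ,
\]
obtained by interpolating the Poisson intensity from $0$ to $\eps$, i.e.\ via the Margulis--Russo derivative for Poisson functionals. More concretely, I would use the thinning representation
\[
\frac{d}{d\theta}\E D_\theta=\E\int \eps F\,\Delta D_\theta ,
\]
which carries the monotone, nonnegative increments.

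\textbf{Second-order term and remainder.} The first-order increment $\Delta_{(s,b)}D$ evaluated on a configuration with only one other point $(t,b')$ gives the pair interaction. For two jobs, $V-P$ equals the overlap
\[
\min\bigl((b'-(s-t))_+,\ (b+s)_+\bigr)
\]
(the integral of the time during which both are busy). Averaging over the marks and over the intensity product $\eps^2\E[F(cs)F(ct)]$, using stationarity, should produce exactly
\[
\eps^2 b_1\int_0^\infty k_B(u)\E[F(0)F(-cu)]\,du .
\]
This is the computation I would check carefully: integrating the overlap over the later arrival's position gives $b\cdot k_B(u)$ with $u$ the gap. The remainder $R$ then consists of increments involving at least three points. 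Nonnegativity follows from the monotonicity of second differences, which I expect to need a supermodularity-type argument: the pair-overlap contribution must be dominated by the true increment. This is the main obstacle. I would first try to prove that the increment of $D$ from inserting a point is at least the sum over existing points of the pairwise overlaps, and if that fails, settle for $D\ge$ the sum of pair overlaps directly.

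For the upper bound I would bound the increment from inserting a job $(s,b)$ by $b$ times the extra work present. This yields a factor of $\E V\le$ the $M/G/1$ dominant workload with rate $\eps M$, which is at most $\eps M b_2/(2(1-\eps Mb_1))$ by Pollaczek--Khinchine via a domination coupling, minus its first-order term. Iterating gives the $\eps^3M^3b_1^2b_2/(2(1-\eps Mb_1))$ bound. Uniformity in $c$ is automatic since only $0\le F\le M$ is used, and \eqref{eq:unitexpansion} follows by substituting $k_B(u)=(1-u)_+$ and $b_1=b_2=1$.
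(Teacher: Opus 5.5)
\textbf{Gap: the sign and size of the remainder.} Your $D\ge0$ and your insertion monotonicity are fine; they match the paper's excess lemma. The failure is in controlling $R$. You plan to expand $\E D$ by Russo--Margulis interpolation and to get $R\ge0$ from a pathwise superadditivity of $D$ over pair interactions. That superadditivity is false.

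Take three unit-service arrivals with ages $0<a_1<a_2<a_3<1$. Then $V=3-a_3$ and $P=3-a_1-a_2-a_3$, so $D=a_1+a_2$. The three true pair interactions are $a_1$, $a_1$ and $a_2$, with sum $2a_1+a_2>D$. Both of your variants fail on this configuration. Inserting the youngest point raises $D$ by only $a_1$, although its pair interactions with the other two points sum to $2a_1$. The third-order interaction term is therefore negative, so no pathwise supermodularity argument can sign the higher-order part of your expansion.

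The inequality $R\ge0$ holds only after averaging, and it comes from an identity your proposal lacks. Rate conservation for $V^2$ gives $w_\eps(c)=\eps ab_2/2+b_1\E[\lambda_{\eps,c}(0)V(0)]$. Substitute $V=S+D$ \emph{inside this product}, not in $\E V$. Because $\E[S\mid F]=\eps\int_0^\infty k_B(u)F(-cu)\,du$, the second-order term becomes exact. What is left is $R_{\eps,c}=b_1\E[\lambda_{\eps,c}(0)D]$, which is nonnegative since $D\ge0$.

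The same representation gives the upper bound. Use $\lambda_{\eps,c}(0)\le\rho=\eps M$ and $D(A)\le D(\widehat A)$ under the thinning coupling to get $R\le b_1\rho\,\E D(\widehat A)$. Here $\E D(\widehat A)$ equals the Pollaczek--Khinchine mean minus the shot-noise mean $\rho b_2/2$. Your ``bound the increment by $b$ times the extra work and iterate'' has no such representation of $R$ behind it. On its own, $\E D\le\E D(\widehat A)$ bounds $R$ only by $O(\eps^2)$, not $O(\eps^3)$.

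\textbf{Secondary errors.} Your two-point formula is wrong, although the coefficient it is meant to produce is right. For two jobs with gap $u$, let $o=\min\{(b'-u)_+,b\}$ be the overlap time. The excess rises at rate one from the younger arrival for time $o$. It then stays at $o$ until the parallel system empties, and falls at rate one for a further time $o$; compare Appendix~\ref{app:pairs}. For example, with $b=b'=1$, $t=-3/2$ and $s=-1$, the true excess is $1/2$, while $\min\bigl((b'-(s-t))_+,(b+s)_+\bigr)=0$. Integrated over the younger arrival's age, the correct trapezoid has area $b(b'-u)_+$, which averages to $b_1k_B(u)$. Your formula integrates to $bo-o^2/2$, which is strictly smaller whenever $o>0$, so the check you planned would have failed.

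Finally, the display $\E D=\E\int\eps F\,\E[\Delta D]$ is not an identity; it is the intensity derivative at the endpoint $\theta=1$. You also do not address the passage from bounded service to $b_2<\infty$. The paper handles that by truncating to $B\wedge n$ and using monotone convergence under the Poisson dominating queue.
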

We first prove the theorem for bounded $B$, including all moment assertions needed for the balance identity. We then pass to general $B$ by monotone truncation. The stationary construction is the classical one associated with reflected input \cite{Loynes62,Asmussen03}.

\subsection{Stationary construction and moment bounds}
Let $\widehat N$ be a two-sided homogeneous Poisson process of rate $\rho=\eps M$, independent of the environment. Attach independent uniform marks on $[0,1]$ and independent service marks with law $B$ to its points. Keep a point at time $t$ when its uniform mark is at most $F(ct)/M$. Conditional on the whole environment path, the retained points form a Poisson process with intensity \eqref{eq:arrival}. Its service marks remain independent with law $B$.

For a locally finite marked arrival configuration $A=\{(T_i,B_i):i\in\mathcal I\}$, define the workload at time zero by
\begin{equation}\label{eq:reich}
 V(A)=\sup_{u\ge0}\left\{\sum_{-u<T_i\le0}B_i-u\right\}.
\end{equation}
The term $u=0$ is zero when there is no arrival at zero; deterministic times are almost surely not arrival times. Values at an arrival of age $v$ are obtained by taking $u\downarrow v$ from above. Formula \eqref{eq:reich} is also the increasing limit of the workloads obtained by starting empty at times tending to $-\infty$.

\begin{lemma}[Poisson domination for bounded service]\label{lem:domination}
Assume for this lemma that $B$ is bounded. Let $\widehat A$ be the full marked Poisson configuration before thinning, and put $\widehat V=V(\widehat A)$. Under \eqref{eq:domstability}, $\widehat V$ has a finite exponential moment. The retained configuration satisfies
\[
 0\le V(A)\le\widehat V\qquad\text{almost surely}.
\]
Consequently the construction gives a finite stationary workload with moments of all orders, uniformly in $c$.
\end{lemma}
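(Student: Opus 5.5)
The proof has three parts: a pathwise comparison between the thinned and full configurations, an exponential moment for the dominating Poisson workload, and a monotone limit that identifies \eqref{eq:reich} with a stationary, finite workload.

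\textbf{Domination.} The retained configuration $A$ is obtained from $\widehat A$ by deleting points. Deleting a point only lowers the sum $\sum_{-u<T_i\le0}B_i$ inside \eqref{eq:reich}, for every $u$. Hence $V(A)\le V(\widehat A)=\widehat V$ pathwise. The choice $u=0$ gives $V(A)\ge0$.

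\textbf{Exponential moment of $\widehat V$.} Define $S(u)=\sum_{-u<T_i\le0}B_i-u$. Looking backward in time from zero, this is a compound Poisson process of rate $\rho=\eps M$ with jumps distributed as $B$, minus unit drift. Its Laplace exponent is
\[
\kappa(\theta)=\rho\bigl(\E e^{\theta B}-1\bigr)-\theta .
\]
Because $B$ is bounded, $\kappa$ is finite and smooth on all of $\R$. Moreover $\kappa(0)=0$ and $\kappa'(0)=\rho b_1-1<0$ by \eqref{eq:domstability}. Convexity of $\kappa$ and $\kappa(\theta)\to\infty$ as $\theta\to\infty$ (since $B>0$ with positive probability) give a root $\theta^*>0$ with $\kappa(\theta^*)=0$.

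Then $e^{\theta^* S(u)}$ is a nonnegative martingale in $u$ starting at $1$. The maximal inequality (or optional stopping at the first passage above level $x$) yields
\[
\Pp(\widehat V>x)\le e^{-\theta^* x}.
\]
So $\E e^{\theta\widehat V}<\infty$ for every $\theta<\theta^*$, and $\widehat V$ has moments of all orders.

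\textbf{Finiteness, stationarity and uniformity in $c$.} The supremum in \eqref{eq:reich} is almost surely attained over a bounded range of $u$, since $S(u)\to-\infty$ by the strong law. It equals the increasing limit of the workloads started empty at $-T$, as $T\to\infty$, by the standard Loynes argument. Stationarity in time follows because the joint law of the environment, the Poisson points and the marks is shift invariant, and $V$ commutes with shifts. The bound $V(A)\le\widehat V$ holds for every $c$, and the law of $\widehat V$ does not involve $c$ or $F$. Therefore all moments of the stationary workload are bounded uniformly in $c$, by the corresponding moments of $\widehat V$.

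\textbf{Main obstacle.} The step needing the most care is the exponential-moment bound. One must check that the martingale argument applies to the backward process with the right-continuity convention at arrival times. If that is awkward, an alternative is to compare $\widehat V$ with the stationary M/G/1 workload of arrival rate $\rho$. Its Pollaczek--Khinchine transform is finite near zero for bounded $B$, which gives the same exponential moment.
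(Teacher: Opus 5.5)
Your proposal is correct and follows essentially the same route as the paper: pathwise domination because thinning deletes nonnegative service marks, an exponential tail for $\widehat V$ from the backward compound Poisson process via an exponential martingale and the maximal inequality, and stationarity and uniformity in $c$ from translation covariance of \eqref{eq:reich} and a dominating law free of $c$. The only minor difference is that the paper takes any $s>0$ with $\psi(s)<0$ and uses a supermartingale, whereas you use the Lundberg root $\theta^*$ and an exact martingale; both yield the same tail bound.
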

\begin{proof}
On the reversed time axis, the total incoming work of $\widehat A$ during an interval of length $u$ is a compound Poisson process $J(u)$ with rate $\rho$ and jump law $B$. Since $B$ is bounded,
\[
 \psi(s)=\rho(\E e^{sB}-1)-s
\]
is finite for every $s\ge0$. It satisfies $\psi(0)=0$ and $\psi'(0)=\rho b_1-1<0$. Choose $s>0$ with $\psi(s)<0$. Independent increments show that $\exp\{s(J(u)-u)\}$ is a nonnegative supermartingale. Its maximal inequality, first on a finite interval and then by monotone convergence, yields
\[
 \Pp(\widehat V>x)\le e^{-sx},\qquad x>0.
\]
Hence $\E e^{s'\widehat V}<\infty$ for $0<s'<s$. The pathwise domination follows from \eqref{eq:reich}, because thinning removes nonnegative service requirements. The marked construction is jointly stationary with the environment, and \eqref{eq:reich} is translation-covariant. It therefore gives a stationary workload process. The same Poisson upper bound applies for every $c$.
\end{proof}

\subsection{The excess over parallel service}
Define the workload that would remain if each customer were served immediately by its own rate-one server:
\begin{equation}\label{eq:shot}
 S(A)=\sum_{T_i\le0}(B_i+T_i)_+.
\end{equation}
At this stage $B$ is bounded, so the sum is finite. Set
\begin{equation}\label{eq:D}
 D(A)=V(A)-S(A).
\end{equation}
We compare marked configurations by inclusion, retaining the service mark of every common point.

\begin{lemma}[Monotonicity of the excess]\label{lem:excess}
For finite marked configurations, $D(A)\ge0$, and adding an arrival never decreases $D$. More precisely, if an arrival with service requirement $b\ge0$ and age $v\ge0$ is added, then
\begin{equation}\label{eq:insertion}
 V\bigl(A\cup\{(-v,b)\}\bigr)-V(A)\ge(b-v)_+.
\end{equation}
The same conclusions hold for infinite configurations with bounded service marks and finite workload. In particular, under the thinning construction,
\begin{equation}\label{eq:Ddom}
 0\le D(A)\le D(\widehat A).
\end{equation}
\end{lemma}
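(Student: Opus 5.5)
The plan is to derive everything from the insertion inequality \eqref{eq:insertion}. Nonnegativity of $D$ and monotonicity under inclusion then follow by building configurations one arrival at a time.

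\emph{Step 1: insertion for finite configurations.} Write $A'=A\cup\{(-v,b)\}$. For every $u$, the functional inside \eqref{eq:reich} is
\[
\Phi_{A'}(u)=\Phi_A(u)+b\,\1\{u>v\},\qquad \Phi_A(u)=\sum_{-u<T_i\le0}B_i-u .
\]
I split into two cases, using the convention that arrivals at age $v$ are handled by taking $u\downarrow v$.
\begin{enumerate}
\item If the supremum defining $V(A)$ is attained, or approached, at some $u\ge v$, then $V(A')\ge V(A)+b\ge V(A)+(b-v)_+$.
\item Otherwise we use two facts. First, $V(A)=\sup_{u\le v}\Phi_A(u)$. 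Second, $V(A')\ge \Phi_A(v^+)+b$, and since $\Phi_A$ decreases at unit rate between arrivals, $\Phi_A(v^+)\ge \sup_{u\le v}\Phi_A(u)-v$.
\end{enumerate}
The second fact needs care. It is cleanest to use the Lindley recursion. The workload just after time $-v$ is $W+b$, where $W$ is the old workload at $-v$. It then drains at unit rate. Define the two workload paths forward from $-v$: the new path dominates the old one, and their difference starts at $b$. The difference decreases only at rate at most one, and only while the old queue is empty and the new one is not. Hence at time $0$ the difference is at least $(b-v)_+$. This coupling (sample-path) argument is the one I will actually write, since it avoids case analysis on the supremum.

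\emph{Step 2: nonnegativity and monotonicity of $D$ for finite configurations.} Adding $(-v,b)$ changes $S$ by exactly $(b-v)_+$, so Step 1 gives $D(A')\ge D(A)$. Starting from the empty configuration, where $D=0$, and inserting the points one by one yields $D(A)\ge0$. Applying the same insertion to $A\subseteq\widehat A$ point by point yields $D(A)\le D(\widehat A)$ for finite configurations. Arrivals with $T_i>0$ contribute nothing, so they are discarded.

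\emph{Step 3: infinite configurations, which I expect to be the main technical point.} Restrict to the arrivals in the window $(-n,0]$, written $A_n$. By \eqref{eq:reich}, $V(A_n)\uparrow V(A)$: this is the monotone limit of workloads started empty at $-n$, noted after \eqref{eq:reich}, and it is finite by assumption. Since marks are bounded by some $\bar b$, only arrivals with $T_i>-\bar b$ contribute to $S$. These are finitely many by local finiteness, so $S(A_n)=S(A)$ for $n\ge\bar b$. Therefore $D(A_n)\to D(A)$, and inequality \eqref{eq:insertion} and $D\ge0$ pass to the limit. For \eqref{eq:Ddom} I apply the finite result to $A_n\subseteq\widehat A_n$ and let $n\to\infty$. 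Lemma~\ref{lem:domination} gives $\widehat V<\infty$ almost surely, so both limits are finite.

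The only delicate point is justifying the convergence $V(A_n)\uparrow V(A)$ from the supremum formula. This is immediate because the $\Phi$-functions of $A_n$ and $A$ agree for $u\le n$, and the supremum over $u>n$ of $\Phi_{A_n}$ is at most $\Phi_{A_n}(n)$.
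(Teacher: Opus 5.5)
Your proposal is correct. Its skeleton matches the paper's proof:
\begin{enumerate}
\item prove \eqref{eq:insertion};
\item note that $S$ changes by exactly $(b-v)_+$, and build up from $D(\varnothing)=0$;
\item treat infinite configurations by truncating at $-n$, where $S$ is frozen once $n$ exceeds the mark bound and $V(A_n)\uparrow V(A)$.
\end{enumerate}

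The only real difference is how you prove \eqref{eq:insertion}. The paper works directly with the supremum formula \eqref{eq:reich}. For $u>v$ the functional gains $b$. For $u\le v$ it compares with the interval of length $v+\gamma$, which contains the new point, and obtains a value at least $H_A(u)+b-v-\gamma$. This is exactly your case (ii) estimate. It also shows that your case split is unnecessary, because the two bounds together give $V(A')\ge V(A)+b-v$ for every configuration. One small correction to your justification there: the relevant fact is that $\Phi_A(u)+u$ is nondecreasing in $u$, i.e., $\Phi_A$ falls at most at unit rate.

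Your preferred argument instead couples the two forward workload paths from time $-v$ and notes that their gap starts at $b$ and shrinks at rate at most one. This is more transparent dynamically. However, it uses the identification of \eqref{eq:reich} with the forward workload of the queue started empty before the first arrival. That identification is standard for finite configurations and is how the paper describes \eqref{eq:reich}, but the paper's route needs nothing beyond the definition.

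Your explicit check that $V(A_n)\uparrow V(A)$ supplies a step the paper only asserts. It works because the functionals agree for $u\le n$ and $\Phi_{A_n}$ is dominated by $\Phi_{A_n}(n)$ for $u>n$.
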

\begin{proof}
It suffices first to prove \eqref{eq:insertion} for a finite configuration. If $b\le v$, the right side is zero and the result follows from monotonicity of \eqref{eq:reich}. Suppose $b>v$. Let
\[
 H_A(u)=\sum_{-u<T_i\le0}B_i-u.
\]
For $u>v$, the term $H_A(u)$ increases by $b$, which is at least $b-v$. For $0\le u\le v$ and any $\gamma>0$, all old arrivals counted by $H_A(u)$ are also counted in the interval of length $v+\gamma$. The latter interval contains the added arrival, so the new supremum is at least
\[
 H_A(u)+b-v+u-\gamma\ge H_A(u)+b-v-\gamma.
\]
Letting $\gamma\downarrow0$ and then taking the supremum over $u$ proves \eqref{eq:insertion}. The added point contributes exactly $(b-v)_+$ to \eqref{eq:shot}. Thus $D$ is nondecreasing under insertion. Since $D(\varnothing)=0$, it is nonnegative on every finite configuration.

For an infinite configuration, truncate the arrivals at time $-L$. The workloads increase to \eqref{eq:reich}, and the parallel-service sum is unchanged once $L$ exceeds a common bound on the service marks. Passing to the limit gives the assertion. Applying the finite comparison to the paired truncations of $A\subseteq\widehat A$ proves \eqref{eq:Ddom}.
\end{proof}

\subsection{Workload balance}
Let $V(t)$ denote the stationary workload process, with its right-continuous convention at arrivals. Between arrivals, $V^2$ has derivative $-2V$, including when $V=0$. At an arrival with mark $b$, its jump is $2V(t-)b+b^2$. Consequently, for $T>0$,
\begin{equation}\label{eq:pathbalance}
 V(T)^2-V(0)^2=-2\int_0^T V(t)\,dt
 +\sum_{0<T_i\le T}\bigl(2V(T_i-)B_i+B_i^2\bigr).
\end{equation}
Lemma~\ref{lem:domination} justifies expectations of all terms. Conditional on the whole environment, the arrival process is Poisson, and its service marks are independent of the prearrival workload. Compensation in \eqref{eq:pathbalance} and stationarity give
\begin{equation}\label{eq:balance}
 w_\eps(c)=\frac{\eps a b_2}{2}+b_1\E[\lambda_{\eps,c}(0)V(0)].
\end{equation}
Here $V(0-)=V(0)$ almost surely. One may use the filtration containing the whole environment path from time zero and the Poisson randomness revealed up to the present; the compensator remains $\lambda_{\eps,c}(t)\,dt$.

For the dominating homogeneous queue, the same identity has constant intensity $\rho$. It gives
\begin{equation}\label{eq:pk}
 \E\widehat V=\frac{\rho b_2}{2(1-\rho b_1)}.
\end{equation}
Taking expectations in \eqref{eq:shot} for that queue gives
\begin{equation}\label{eq:shotmean}
 \E\widehat S=\rho\int_0^\infty k_B(u)\,du=\frac{\rho b_2}{2}.
\end{equation}
The last equality follows by integrating $(B-u)_+$ and then taking its expectation. Subtracting \eqref{eq:shotmean} from \eqref{eq:pk},
\begin{equation}\label{eq:meanD}
 \E D(\widehat A)=\frac{\rho^2 b_1b_2}{2(1-\rho b_1)}.
\end{equation}
\begin{proof}[Proof of Theorem~\ref{thm:expansion} for bounded service]
Insert $V=S+D$ into \eqref{eq:balance}. Given the environment, the expected parallel-service workload is
\[
 \E[S\mid F]=\eps\int_0^\infty k_B(u)F(-cu)\,du.
\]
Multiplying by $\lambda_{\eps,c}(0)=\eps F(0)$ and taking expectations gives the second-order term of \eqref{eq:expansion}. Its remaining term is exactly
\begin{equation}\label{eq:Rexact}
 R_{\eps,c}=b_1\E[\lambda_{\eps,c}(0)D(A)].
\end{equation}
It is nonnegative by Lemma~\ref{lem:excess}. Since $\lambda_{\eps,c}(0)\le\eps M=\rho$, equations \eqref{eq:Ddom} and \eqref{eq:meanD} imply
\[
 R_{\eps,c}\le b_1\rho\,\E D(\widehat A)
 =\frac{\rho^3 b_1^2b_2}{2(1-\rho b_1)}.
\]
This proves \eqref{eq:remainder}. Setting $B\equiv1$ gives \eqref{eq:unitexpansion}.
\end{proof}

\subsection{Removing boundedness of the service law}
\begin{proof}[Completion of the proof of Theorem~\ref{thm:expansion}]
Suppose only that $b_2<\infty$. In the common marked Poisson construction, replace each mark $B_i$ by $B_i^{(n)}=B_i\wedge n$. Write $V_n$, $w_n$, $b_{1,n}$, $b_{2,n}$ and $k_n$ for the resulting quantities. For every fixed interval, its incoming work increases to that of the untruncated configuration. Interchanging two suprema gives
\[
 V_n\uparrow V:=\sup_{u\ge0}\left\{\sum_{-u<T_i\le0}B_i-u\right\}.
\]
For the dominating homogeneous queue, the bounded-service calculation and monotone convergence give
\[
 \E\widehat V
 =\lim_n\frac{\rho b_{2,n}}{2(1-\rho b_{1,n})}
 =\frac{\rho b_2}{2(1-\rho b_1)}<\infty.
\]
Consequently $V\le\widehat V<\infty$ almost surely and $w_n\uparrow w_\eps(c)<\infty$. The stationary construction is unchanged. In the coefficient formula, $b_{1,n}\uparrow b_1$, $k_n\uparrow k_B$ and the correlation integrand is nonnegative; hence $A_{B\wedge n}(c)\uparrow A_B(c)$. This limit is finite because $\int k_B=b_2/2$ and $F\le M$.

The bounded-service identity now implies convergence of
\[
 R_{n,c}=w_n-\frac{\eps a b_{2,n}}2-\eps^2A_{B\wedge n}(c)
\]
to the remainder in \eqref{eq:expansion}. Passing to the limit in its nonnegative upper and lower bounds proves \eqref{eq:remainder}. No second moment of the untruncated workload is assumed. A finite second service moment gives a finite workload mean; the exponential workload moment in the explicit examples follows separately from their bounded service laws.
\end{proof}
This argument uses monotone convergence and an integrable Poisson dominating workload, rather than an unsupported exchange of stationary limits. Uniform-integrability criteria in stochastic-order language are discussed more generally in \cite{LV13}.

\begin{corollary}[A quantitative sign test]\label{cor:signtest}
For a fixed service law, define
\begin{equation}\label{eq:coefficient}
 A_B(c)=b_1\int_0^\infty k_B(u)\E[F(0)F(-cu)]\,du.
\end{equation}
If $0<c_1<c_2$ and $\Delta=A_B(c_2)-A_B(c_1)>0$, then
\begin{equation}\label{eq:signlower}
 w_\eps(c_2)-w_\eps(c_1)
 \ge\eps^2\left(\Delta-
 \frac{\eps M^3b_1^2b_2}{2(1-\eps M b_1)}\right).
\end{equation}
In particular, the difference is positive whenever
\begin{equation}\label{eq:epsthreshold}
 0<\eps<\frac{2\Delta}{M^3b_1^2b_2+2\Delta M b_1}.
\end{equation}
\end{corollary}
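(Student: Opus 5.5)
The plan is to apply Theorem~\ref{thm:expansion} at the two speeds $c_1$ and $c_2$ and subtract. The first-order term $\eps a b_2/2$ does not depend on $c$, so it cancels, and we obtain
\[
 w_\eps(c_2)-w_\eps(c_1)=\eps^2\bigl(A_B(c_2)-A_B(c_1)\bigr)+R_{\eps,c_2}-R_{\eps,c_1}
 =\eps^2\Delta+R_{\eps,c_2}-R_{\eps,c_1}.
\]
Both remainders lie in $[0,K]$ with $K=\eps^3M^3b_1^2b_2/\bigl(2(1-\eps Mb_1)\bigr)$, and this bound is the same for every $c$. Using $R_{\eps,c_2}\ge0$ and $R_{\eps,c_1}\le K$ gives $w_\eps(c_2)-w_\eps(c_1)\ge\eps^2\Delta-K$. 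Factoring out $\eps^2$ yields \eqref{eq:signlower}.

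For the threshold, the right side of \eqref{eq:signlower} is positive exactly when
\[
 \eps M^3b_1^2b_2<2\Delta(1-\eps Mb_1),
\]
because \eqref{eq:domstability} makes $1-\eps Mb_1>0$, so we may multiply through by it. Rearranging, this is $\eps\bigl(M^3b_1^2b_2+2\Delta Mb_1\bigr)<2\Delta$, which is \eqref{eq:epsthreshold}.

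We also need \eqref{eq:epsthreshold} to imply \eqref{eq:domstability}, so that the expansion is actually available. The threshold is at most $2\Delta/(2\Delta Mb_1)=1/(Mb_1)$, with strict inequality since $M^3b_1^2b_2>0$. Hence $\eps Mb_1<1$ holds automatically. I do not see a real obstacle: the only points needing care are this consistency with the stability condition and keeping the sign of $1-\eps Mb_1$ correct when cross-multiplying.
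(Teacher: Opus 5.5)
Your proposal is correct and follows the paper's proof: cancel the $c$-independent first-order term, use $R_{\eps,c_2}\ge0$ together with the uniform upper bound on $R_{\eps,c_1}$, and then solve the linear inequality in $\eps$. As the paper also notes, the threshold lies strictly below $1/(Mb_1)$, so the stability condition \eqref{eq:domstability} holds automatically.
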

\begin{proof}
The first-order term in \eqref{eq:expansion} is independent of $c$. Use $R_{\eps,c_2}\ge0$ and the upper bound on $R_{\eps,c_1}$. Solving the resulting strict inequality yields \eqref{eq:epsthreshold}, whose upper endpoint is smaller than $1/(M b_1)$.
\end{proof}

\section{The cyclic environment and the 64-state example}\label{sec:cycle}
For the family \eqref{eq:generator}, put
\begin{equation}\label{eq:delta}
 \theta=\frac{2\pi}{m},\qquad d=\tan\frac\pi m+\eta.
\end{equation}
\begin{lemma}\label{lem:cycle}
The stationary intensity $F(t)=g_m(X_t)$ has mean two, takes $m$ distinct values in $(1,3)$, and satisfies
\begin{equation}\label{eq:covariance}
 \E[F(0)F(t)]=4+\frac12e^{-d|t|}\cos t.
\end{equation}
It is an irreducible, mixing continuous-time Markov chain. If $\eta>0$, all its off-diagonal transition rates are positive.
\end{lemma}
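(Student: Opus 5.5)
The plan is to diagonalize $Q=Q_{m,\eta}$ on the Fourier basis of $\Z/m\Z$ and expand $g_m$ in two eigenfunctions. Put $e_k(j)=e^{\ii k\theta j}$ with $\theta=2\pi/m$. Then $P_me_k=e^{\ii k\theta}e_k$. Also $\Pi_me_k=0$ for $k\ne0$ and $\Pi_me_0=e_0$.

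For $k=\pm1$ this gives
\[
 Qe_{\pm1}=\Bigl(\frac{e^{\pm\ii\theta}-1}{\sin\theta}-\eta\Bigr)e_{\pm1}.
\]
By the half-angle identity $(\cos\theta-1)/\sin\theta=-\tan(\theta/2)=-\tan(\pi/m)$, the eigenvalues are $-d\pm\ii$ with $d$ as in \eqref{eq:delta}. Writing $\phi=\pi/(2m)$, we have
\[
 g_m=2+\tfrac12\bigl(e^{\ii\phi}e_1+e^{-\ii\phi}e_{-1}\bigr).
\]
Hence
\[
 e^{tQ}g_m=2+\tfrac12e^{-dt}\bigl(e^{\ii(t+\phi)}e_1+\text{c.c.}\bigr),
 \qquad\text{i.e.}\qquad
 (e^{tQ}g_m)(j)=2+e^{-dt}\cos(j\theta+\phi+t).
\]

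Next I need the invariant law. Both $P_m$ and $\Pi_m$ are doubly stochastic, so $1^\top Q=0$ and the uniform distribution is invariant. For $t\ge0$, stationarity gives $\E[F(0)F(t)]=\frac1m\sum_j g_m(j)(e^{tQ}g_m)(j)$. Expanding the product:
\begin{itemize}
\item the constant terms contribute $4$;
\item the cross terms with a constant vanish, since $\sum_j\cos(j\theta+\alpha)=0$ for $m\ge3$;
\item the product of the two cosines averages to $\frac12\cos t$, since $\frac1m\sum_j\cos(2j\theta+\dots)=0$ (this needs $2\theta\not\equiv0$, i.e.\ $m\ge3$).
\end{itemize}
This yields $4+\frac12e^{-dt}\cos t$. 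For $t<0$ the formula follows by stationarity and symmetry, since the covariance depends only on $|t|$. The mean is $\frac1m\sum_j g_m(j)=2$.

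For the values, $1<g_m<3$ because $\cos(j\theta+\phi)=\pm1$ would require $\phi\equiv0\pmod{\theta/2}$, and $\phi=\theta/4$ is not such a multiple. Injectivity holds because $\cos x=\cos y$ with $x=j\theta+\phi$ and $y=k\theta+\phi$, $j\ne k$, would force $x+y\equiv0\pmod{2\pi}$, i.e.\ $(j+k)\theta+2\phi\in2\pi\Z$. That gives $(j+k)+\tfrac12\in m\Z$, which is impossible. So $F$ takes $m$ distinct values and is a Markov chain with the same structure as $X$.

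Finally, the off-diagonal rates of $Q$ are
\[
 Q(j,j+1)=1/\sin\theta+\eta/m,\qquad Q(j,k)=\eta/m\ \text{ otherwise}.
\]
All are positive when $\eta>0$. Irreducibility follows from the cyclic edges even when $\eta=0$. For a finite irreducible continuous-time chain, $e^{tQ}$ converges to the uniform projection, so the chain is mixing and ergodic.

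The main obstacle is the bookkeeping of the complex eigen-expansion and the orthogonality sums, in particular keeping the phase correct so that the real part gives exactly $\cos t$ with no residual phase. Since $P_mh(j)=h(j+1)$ shifts the argument by $+t$ in both cosines consistently, the phase shift should cancel in the product average.
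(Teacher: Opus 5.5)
Your proposal is correct and follows essentially the paper's route: diagonalize $Q_{m,\eta}$ on the characters $e^{\pm\ii\theta j}$, with eigenvalues $-d\pm\ii$, and average under the uniform law. Injectivity and the strict bounds $1<g_m<3$ come from the same cosine-coincidence argument, and irreducibility and positive rates come from the cyclic and refresh entries. The paper compresses the covariance step into a single phase-shifted eigenfunction $h$ together with the vanishing uniform averages of $h$ and $h^2$. You instead write $e^{tQ}g_m$ out explicitly as $2+e^{-dt}\cos(j\theta+\phi+t)$, which is the same calculation done in real form.
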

\begin{proof}
Both $P_m$ and $\Pi_m$ preserve the uniform law. Clockwise transitions make the chain irreducible; a finite irreducible continuous-time chain is mixing. These finite-state facts may also be read from its transition semigroup; see \cite{Norris97} for background. Equality of two values of $g_m$ requires either $j-k\in m\Z$ or $j+k+1/2\in m\Z$. The latter is impossible, so $g_m$ is injective. The cosine never equals $1$ or $-1$, which also proves the strict intensity bounds.

Let $h(j)=\exp(\ii\theta j+\ii\theta/4)$. Then $P_mh=e^{\ii\theta}h$, $\Pi_mh=0$, and
\[
 Q_{m,\eta}h=\left(\frac{e^{\ii\theta}-1}{\sin\theta}-\eta\right)h=(-d+\ii)h.
\]
The uniform averages of $h$ and $h^2$ vanish, whereas $|h|=1$. Thus, for $t\ge0$, the covariance of the real parts is $\tfrac12e^{-dt}\cos t$. A real scalar stationary covariance is even, proving \eqref{eq:covariance} for all $t$. Finally, the refresh term gives $\eta/m>0$ to every off-diagonal entry.
\end{proof}
For $d\ge0$ and $c>0$, define
\begin{equation}\label{eq:Id}
 I_d(c)=\int_0^1(1-u)e^{-dcu}\cos(cu)\,du.
\end{equation}
For unit service in the cyclic environment,
\begin{equation}\label{eq:Aunit}
 A_1(c)=2+\frac12I_d(c).
\end{equation}
We need a lower bound on the difference at two particular speeds, not a numerical approximation to either queue.

\begin{lemma}[Endpoint comparison]\label{lem:endpointgap}
For $G(d)=\tfrac12(I_d(3\pi)-I_d(2\pi))$,
\begin{equation}\label{eq:Gexact}
 G(d)=-\frac{d}{12\pi(1+d^2)}
 +\frac{1-d^2}{72\pi^2(1+d^2)^2}
 \left(-5+4e^{-3\pi d}+9e^{-2\pi d}\right).
\end{equation}
If $0\le d\le1/20$, then
\begin{equation}\label{eq:Gbound}
 G(d)>\frac{363599}{115776720}>\frac1{320}.
\end{equation}
\end{lemma}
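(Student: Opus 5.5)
The plan is to compute $I_d(c)$ in closed form by complexifying the cosine, and then to bound $G$ on $[0,1/20]$ by monotonicity and rational arithmetic. Write $\cos(cu)=\Rea e^{\ii cu}$, so that $I_d(c)=\Rea\int_0^1(1-u)e^{-\alpha u}\,du$ with $\alpha=c(d-\ii)$. Integrating by parts gives
\[
 \int_0^1(1-u)e^{-\alpha u}\,du=\frac1\alpha-\frac{1-e^{-\alpha}}{\alpha^2}.
\]
Since $1/\alpha=(d+\ii)/(c(1+d^2))$, we have $\Rea(1/\alpha)=d/(c(1+d^2))$ and $\Rea(1/\alpha^2)=(d^2-1)/(c^2(1+d^2)^2)$.

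The choices $c=2\pi$ and $c=3\pi$ make $e^{-\alpha}$ real. It equals $e^{-2\pi d}$ at $c=2\pi$ and $-e^{-3\pi d}$ at $c=3\pi$. Hence, exactly,
\[
 I_d(2\pi)=\frac{d}{2\pi(1+d^2)}+\frac{(1-d^2)(1-e^{-2\pi d})}{4\pi^2(1+d^2)^2},
 \qquad
 I_d(3\pi)=\frac{d}{3\pi(1+d^2)}+\frac{(1-d^2)(1+e^{-3\pi d})}{9\pi^2(1+d^2)^2}.
\]
Taking half the difference over the common denominator $72\pi^2$ gives the bracket $4(1+e^{-3\pi d})-9(1-e^{-2\pi d})=-5+4e^{-3\pi d}+9e^{-2\pi d}$. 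This yields \eqref{eq:Gexact} directly.

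For \eqref{eq:Gbound}, I first reduce to the endpoint $d=1/20$ by monotonicity. On $[0,1]$ the map $d\mapsto -d/(1+d^2)$ is nonincreasing. The factor $(1-d^2)/(1+d^2)^2$ is positive and decreasing on $[0,1/20]$. The bracket $-5+4e^{-3\pi d}+9e^{-2\pi d}$ is decreasing, and it stays positive on $[0,1/20]$; this needs only a crude check at $d=1/20$, where it is about $4.07$. A product of two positive decreasing functions is decreasing, so $G$ is nonincreasing on $[0,1/20]$, and it suffices to prove $G(1/20)>363599/115776720$.

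The remaining step is a certified evaluation at $d=1/20$, and I expect it to be the main obstacle. The difficulty is only in making the arithmetic rigorous with rational inequalities. The quantities that need lower bounds are $e^{-3\pi/20}$ and $e^{-\pi/10}$, together with the signed appearances of $\pi$. I would use the alternating Taylor lower bound $e^{-x}\ge1-x+x^2/2-x^3/6$, taken to enough terms for the required precision, and evaluate it at a rational upper bound for $x$. I would also use rational enclosures such as $3.1415<\pi<3.1416$. Each occurrence of $\pi$ is replaced by whichever endpoint makes $G$ smaller. In the first term, $1/\pi$ enters with a negative sign, so it uses the lower bound for $\pi$. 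The prefactor $1/(72\pi^2)$ of the positive second term uses the upper bound for $\pi$. The result is an explicit rational number exceeding $363599/115776720$. The final comparison $363599/115776720>1/320$ is immediate by cross-multiplication, since $363599\cdot320>115776720$.
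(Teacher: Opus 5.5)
Your proposal is correct and follows essentially the paper's proof. You derive the same closed form for $I_d(k\pi)$; your complexification is exactly the formula $I_d(c)=\Rea\bigl((z-1+e^{-z})/z^2\bigr)$ recorded in the appendix. You then bound $G$ by rational inequalities whose worst case is at $d=1/20$.

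The only difference is that you make the reduction to $d=1/20$ explicit by monotonicity and plan sharper enclosures. The paper instead bounds each factor uniformly on $[0,1/20]$ using just $e^{-x}\ge1-x$, $3<\pi<22/7$, $\pi^2<10$ and $d^2\le1/400$, which already yields exactly $363599/115776720$. Since $G(1/20)\approx0.0044$ leaves ample slack, the step you flag as the main obstacle is routine.
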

\begin{proof}
Direct integration gives, for $k=2,3$,
\[
 I_d(k\pi)=\frac{d}{k\pi(1+d^2)}
 +\frac{(1-d^2)(1-(-1)^ke^{-k\pi d})}{k^2\pi^2(1+d^2)^2}.
\]
Subtraction proves \eqref{eq:Gexact}. For $0\le d\le1/20$, use $e^{-x}\ge1-x$ and $\pi<22/7$ to obtain
\[
 -5+4e^{-3\pi d}+9e^{-2\pi d}\ge8-30\pi d>\frac{23}{7}.
\]
This bracket is positive. The bounds $\pi>3$, $\pi^2<10$ and $d^2\le1/400$ yield
\[
 G(d)>-\frac1{720}
 +\frac{399\cdot400}{720\cdot401^2}\frac{23}{7}
 =\frac{363599}{115776720}.
\]
The difference between this fraction and $1/320$ is $7187/463106880>0$.
\end{proof}

\begin{lemma}[Perturbing unit service]\label{lem:service}
Suppose $\E B=1$ and $\E B^2<\infty$. Then
\begin{equation}\label{eq:kernelmass}
 h_B(u):=k_B(u)-(1-u)_+\ge0,
 \qquad \int_0^\infty h_B(u)\,du=\frac{\Var(B)}2.
\end{equation}
If $C(t)=\E[F(0)F(t)]$ takes its values in an interval of length $L_C$, then for any $c_1,c_2>0$,
\begin{equation}\label{eq:servicegap}
 \left|A_B(c_2)-A_B(c_1)-A_1(c_2)+A_1(c_1)\right|
 \le L_C\frac{\Var(B)}2.
\end{equation}
For \eqref{eq:covariance}, one can take $L_C=1$.
\end{lemma}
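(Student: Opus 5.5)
We prove the three assertions in order: nonnegativity of $h_B$, its integral, and then the covariance estimate.

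\textbf{Nonnegativity of $h_B$.} The map $x\mapsto(x-u)_+$ is convex, so Jensen's inequality gives
\[
k_B(u)=\E(B-u)_+\ge(\E B-u)_+=(1-u)_+ .
\]
This holds for every $u\ge0$, hence $h_B\ge0$.

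\textbf{Integral of $h_B$.} Integrating $(B-u)_+$ over $u$ and using Tonelli, as in the derivation of \eqref{eq:shotmean}, yields $\int_0^\infty k_B(u)\,du=b_2/2$. Likewise $\int_0^\infty(1-u)_+\,du=1/2$. Both integrals are finite because $b_2<\infty$. Subtracting them and using $b_1=1$ gives
\[
\int_0^\infty h_B(u)\,du=\frac{b_2-1}{2}=\frac{\Var(B)}{2}.
\]

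\textbf{The estimate \eqref{eq:servicegap}.} Since $b_1=1$, we have $A_B(c)=\int_0^\infty k_B(u)C(-cu)\,du$. The unit-service coefficient is $A_1(c)=\int_0^\infty(1-u)_+C(-cu)\,du$. Hence
\[
A_B(c)-A_1(c)=\int_0^\infty h_B(u)\,C(-cu)\,du .
\]
Write $C(t)$ for the stationary correlation, which is bounded by $M^2$; the integral converges absolutely because $h_B$ is integrable.

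Now subtract the two speeds:
\[
A_B(c_2)-A_1(c_2)-A_B(c_1)+A_1(c_1)=\int_0^\infty h_B(u)\bigl(C(-c_2u)-C(-c_1u)\bigr)\,du .
\]
By hypothesis $C$ takes values in an interval of length $L_C$, so $|C(-c_2u)-C(-c_1u)|\le L_C$ for every $u$. Since $h_B\ge0$, the absolute value of the difference is at most $L_C\int_0^\infty h_B=L_C\Var(B)/2$. This is \eqref{eq:servicegap}.

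\textbf{The value $L_C=1$ for the cyclic environment.} By \eqref{eq:covariance}, $C(t)=4+\tfrac12e^{-d|t|}\cos t$. The second term lies in $[-\tfrac12,\tfrac12]$, so $C$ takes values in $[7/2,9/2]$, an interval of length $1$.

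The only step needing care is the interchange of integration and expectation in computing $\int_0^\infty k_B$. It is justified by Tonelli's theorem, since the integrand is nonnegative.
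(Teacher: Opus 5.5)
Your proposal is correct and follows the paper's proof essentially step for step. Both use convexity of $b\mapsto(b-u)_+$ for $h_B\ge0$, integrate $(B-u)_+$ to get $\Var(B)/2$, write the double difference as $\int_0^\infty h_B(u)\bigl(C(c_2u)-C(c_1u)\bigr)\,du$ bounded by $L_C\int_0^\infty h_B$, and read off the range $[7/2,9/2]$ from \eqref{eq:covariance}; writing $C(-cu)$ instead of the paper's $C(cu)$ changes nothing, since a stationary correlation is even.
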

\begin{proof}
Convexity of $b\mapsto(b-u)_+$ proves the first inequality. Integration gives $(\E B^2-1)/2=\Var(B)/2$. The expression in \eqref{eq:servicegap} equals
\[
 \int_0^\infty h_B(u)\bigl(C(c_2u)-C(c_1u)\bigr)\,du,
\]
whose absolute value is bounded as claimed. Formula \eqref{eq:covariance} lies in $[7/2,9/2]$.
\end{proof}

\begin{proof}[Proof of Theorem~\ref{thm:64}]
Write $d_* = \tan(\pi/64)+1/2048$. For $x_0=11/224$, the elementary bounds $\pi<22/7$, $\sin x\le x$ and $\cos x\ge1-x^2/2$ give
\begin{equation}\label{eq:64damping}
 d_*<\frac{x_0}{1-x_0^2/2}+\frac1{2048}
 =\frac{10192775}{205273088}<\frac1{20}.
\end{equation}
The last gap is $354397/1026365440>0$. The map $x/(1-x^2/2)$ is increasing on $[0,\sqrt2)$, which justifies substituting $x_0$.

The unit-service coefficient gap is therefore greater than $1/320$. For \eqref{eq:uniformservice},
\begin{equation}\label{eq:uniformmoments}
 b_1=1,\qquad \Var(B)=\frac1{30000},\qquad b_2=\frac{30001}{30000}.
\end{equation}
Lemma~\ref{lem:service} loses at most $1/60000$. Since $M=3$ and $0<\eps\le10^{-4}$,
\[
 \frac{27\eps}{2(1-3\eps)}\le\frac{27}{19994}<\frac1{700}.
\]
Corollary~\ref{cor:signtest} now gives
\begin{align}\label{eq:64fullmargin}
 \frac{w_\eps(3\pi)-w_\eps(2\pi)}{\eps^2}
 &>\frac1{320}-\frac1{60000}-\frac{30001}{21000000}\notag\\
 &=\frac{5879}{3500000}>\frac1{600}.
\end{align}
The final gap is $137/10500000>0$. Lemma~\ref{lem:cycle} verifies the environmental assumptions. The off-diagonal generator entries are
\[
 (Q_{64,1/2048})_{jk}
 =\frac{\1_{\{k=j+1\pmod{64}\}}}{\sin(\pi/32)}+\frac1{131072}>0.
\]
Bounded service and $3\eps<1$ give the exponential workload moment by Lemma~\ref{lem:domination}.
\end{proof}
\begin{remark}
No minimality of 64 states is asserted. Larger cycles also give explicit examples; the endpoint identity makes a separate large-state construction unnecessary.
\end{remark}

\section{A counterexample with positive autocovariance}\label{sec:positive}
Let $X$ be the 64-state chain above. Independently, let $Z$ be the stationary chain on $\{-1,1\}$ that flips sign at rate $\alpha/2$, where
\begin{equation}\label{eq:positiverates}
 \alpha=\frac1{10000},\qquad \zeta=\frac1{100000}.
\end{equation}
On the 128-state product space, add independent uniform refreshes at rate $\zeta$. More precisely, let $Y$ have generator
\begin{equation}\label{eq:positivegenerator}
 \mathcal Q=Q_{64,1/2048}\otimes I+I\otimes Q_Z
                  +\zeta(\Pi_{128}-I),
\end{equation}
and define
\begin{equation}\label{eq:positiveF}
 F_+(t)=3+z+\cos\left(\frac{2\pi j}{64}+\frac\pi{128}\right)
 \quad\text{when }Y_t=(j,z).
\end{equation}
The uniform law is invariant and every off-diagonal transition rate is at least $\zeta/128$. The values with $z=-1$ lie strictly between one and three; those with $z=1$ lie strictly between three and five. Within each block the cosine values are distinct by Lemma~\ref{lem:cycle}. Hence $F_+$ is itself a 128-state Markov chain.

\begin{lemma}\label{lem:positivecov}
For $d_*$ in \eqref{eq:64damping}, the mean of $F_+$ is three and
\begin{equation}\label{eq:positivecov}
 K_+(t):=\Cov(F_+(0),F_+(t))
 =e^{-(\alpha+\zeta)|t|}
       +\frac12e^{-(d_*+\zeta)|t|}\cos t.
\end{equation}
In particular,
\begin{equation}\label{eq:positivecovbound}
 \frac12e^{-(\alpha+\zeta)|t|}\le K_+(t)\le\frac32
 \qquad(t\in\R).
\end{equation}
\end{lemma}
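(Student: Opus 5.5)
The proof will be a spectral computation: find eigenfunctions of $\mathcal Q$ that express $F_+-3$ exactly, then apply Lemma~\ref{lem:cycle}'s method to each term.

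\emph{Invariance and mean.} Under the uniform law on $\{0,\dots,63\}\times\{-1,1\}$, the cosine term averages to zero (as in Lemma~\ref{lem:cycle}) and $z$ averages to zero. Hence $\E F_+=3$. The uniform law is invariant for all three summands of \eqref{eq:positivegenerator}: $Q_{64,1/2048}$ preserves the uniform law on the cycle by Lemma~\ref{lem:cycle}, $Q_Z$ is symmetric, and $\Pi_{128}$ averages against the uniform law.

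\emph{Eigenfunctions.} Write $F_+-3=\phi_1+\Rea\phi_2$, where $\phi_1(j,z)=z$ and $\phi_2(j,z)=h(j)$ with $h(j)=\exp(\ii\theta j+\ii\theta/4)$ and $\theta=2\pi/64$. Since $\pi/128=\theta/4$, this is the same $h$ as in Lemma~\ref{lem:cycle}.
\begin{enumerate}
\item For $\phi_1$: the cycle generator annihilates it. The flip chain gives $Q_Z\phi_1=\tfrac{\alpha}{2}(-z-z)=-\alpha\phi_1$. The refresh gives $\zeta(\Pi_{128}-I)\phi_1=-\zeta\phi_1$ because $\phi_1$ has uniform mean zero. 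Hence $\mathcal Q\phi_1=-(\alpha+\zeta)\phi_1$.
\item For $\phi_2$: the proof of Lemma~\ref{lem:cycle} gives $Q_{64,1/2048}h=(-d_*+\ii)h$. The factor $Q_Z$ kills $\phi_2$, and the refresh again contributes $-\zeta$. Hence $\mathcal Q\phi_2=(-d_*-\zeta+\ii)\phi_2$.
\end{enumerate}
Consequently $e^{t\mathcal Q}\phi_1=e^{-(\alpha+\zeta)t}\phi_1$ and $e^{t\mathcal Q}\phi_2=e^{(-d_*-\zeta+\ii)t}\phi_2$ for $t\ge0$.

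\emph{Covariance.} For $t\ge0$, $K_+(t)=\E[(F_+(0)-3)\,e^{t\mathcal Q}(F_+-3)(Y_0)]$ with $Y_0$ uniform. Expanding gives the following terms.
\begin{enumerate}
\item $\E\phi_1^2=1$.
\item The cross terms $\E[\phi_1\Rea\phi_2]$ and $\E[\Rea\phi_2\,\phi_1]$ vanish, because the uniform law is a product and each factor has mean zero.
\item The term $\E[\Rea\phi_2\cdot\Rea(e^{(-d_*-\zeta+\ii)t}\phi_2)]$ equals $\tfrac12 e^{-(d_*+\zeta)t}\cos t$. This is the same computation as in Lemma~\ref{lem:cycle}: the averages of $h^2$ vanish and $|h|=1$.
\end{enumerate}
Evenness of a real stationary covariance extends \eqref{eq:positivecov} to all $t\in\R$.

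\emph{Bounds.} The upper bound $K_+\le 1+\tfrac12$ is immediate. For the lower bound, $\cos t\ge-1$ gives
\[
K_+(t)\ge e^{-(\alpha+\zeta)|t|}-\tfrac12 e^{-(d_*+\zeta)|t|}.
\]
This is at least $\tfrac12e^{-(\alpha+\zeta)|t|}$ provided $d_*\ge\alpha$. That holds since $d_*>\eta=1/2048>10^{-4}=\alpha$.

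The only step requiring care is checking that the refresh term acts as $-\zeta$ on each mean-zero eigenfunction and does not couple the two components. This follows because $\Pi_{128}$ annihilates every function with uniform mean zero.
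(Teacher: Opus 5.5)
Your proposal is correct and follows essentially the same route as the paper: the same two eigenfunctions ($z$ and the cyclic character), the refresh shifting both eigenvalues by $-\zeta$ because each has uniform mean zero, vanishing cross terms under the product law, variance contributions $1$ and $\tfrac12$, and the lower bound obtained from $d_*\ge\alpha$. The only difference is cosmetic: you justify $d_*>\alpha$ via $d_*>\eta=1/2048>10^{-4}$, whereas the paper uses $d_*>3/64$, and both are valid.
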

\begin{proof}
The centered coordinate $z$ is an eigenfunction of the product generator with eigenvalue $-\alpha$, and the cyclic character used in Lemma~\ref{lem:cycle} has eigenvalue $-d_*+\ii$. Both have uniform mean zero, so the added refresh subtracts $\zeta$ from both eigenvalues. Their cross inner products vanish under the uniform law. The variance contributions are respectively one and one half, proving \eqref{eq:positivecov}. Since $d_*>3/64>\alpha$, the oscillatory term has absolute value at most $\tfrac12e^{-(\alpha+\zeta)|t|}$. This gives the lower bound; the upper bound follows by discarding both exponential decays.
\end{proof}

\begin{proof}[Proof of Theorem~\ref{thm:positive}]
The estimate \eqref{eq:64damping} remains below $1/20$ after adding $\zeta$, since
\[
 \frac1{20}-\frac{10192775}{205273088}-\frac1{100000}
 =\frac{215083341}{641478400000}>0.
\]
Thus the cyclic part of the unit-service coefficient gap still exceeds $1/320$. For the nonoscillating covariance term set $a_+=\alpha+\zeta=11/100000$ and
\[
 J(c)=\int_0^1(1-u)e^{-a_+cu}\,du.
\]
The inequality $1-e^{-x}\le x$ gives
\[
 J(3\pi)-J(2\pi)\ge-a_+\pi\int_0^1u(1-u)\,du
 >-\frac{121}{2100000}.
\]
By Lemma~\ref{lem:positivecov}, the uncentered correlation lies in an interval of length $3/2$. Lemma~\ref{lem:service} therefore loses at most $1/40000$ when service is changed to \eqref{eq:uniformservice}. Finally, $M=5$ and $\eps\le10^{-5}$ give
\[
 \frac{125\eps b_2}{2(1-5\eps)}
 \le\frac{30001}{47997600}<\frac1{1500}.
\]
Combining these estimates with Corollary~\ref{cor:signtest},
\begin{align}\label{eq:positivemargin}
 \frac{w^+_\eps(3\pi)-w^+_\eps(2\pi)}{\eps^2}
 &>\frac1{320}-\frac{121}{2100000}-\frac1{40000}-\frac1{1500}\notag\\
 &=\frac{1663}{700000}>\frac1{500}.
\end{align}
Stationarity, ergodicity, injectivity and positive rates were checked above. Bounded service and $5\eps<1$ give the exponential workload moment. Strict positivity of every finite-lag covariance follows from \eqref{eq:positivecovbound}.
\end{proof}

\section{Arbitrarily many sampled workload reversals}\label{sec:many}
The same mechanism is not limited to a single pair of speeds.
\begin{theorem}\label{thm:many}
For each integer $N\ge1$, put $L=N+1$ and choose
\begin{equation}\label{eq:manyparameters}
 m=1024L^3,\quad \eta=1/m,\quad
 B\sim\Unif[1-h,1+h],\quad h=\frac1{100L}.
\end{equation}
Use the stationary intensity \eqref{eq:generator}. For every $0<\eps\le1/(10000L^2)$ and $j=1,\ldots,N$,
\begin{align}\label{eq:manygaps}
 w_\eps((2j+1)\pi)-w_\eps(2j\pi)&>\frac{\eps^2}{100L^2},\notag\\
 w_\eps((2j+1)\pi)-w_\eps((2j+2)\pi)&>\frac{\eps^2}{100L^2}.
\end{align}
All off-diagonal environmental rates are positive, and the stationary workload has a finite exponential moment.
\end{theorem}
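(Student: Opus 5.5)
The plan is to repeat the proof of Theorem~\ref{thm:64} for each of the $2N$ comparisons. Every comparison involves two speeds $k\pi$ with $2\le k\le 2L$. I will combine three ingredients: the unit-service coefficient $A_1(c)=2+\tfrac12 I_d(c)$ from \eqref{eq:Aunit}, the service perturbation of Lemma~\ref{lem:service} with $L_C=1$, and Corollary~\ref{cor:signtest} with $M=3$ and $b_1=1$. Lemma~\ref{lem:cycle} applies verbatim for $m=1024L^3$ and $\eta=1/m$. It gives the covariance \eqref{eq:covariance} with $d=\tan(\pi/m)+1/m$, and it shows that all off-diagonal rates are at least $\eta/m>0$. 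Since $B\le 1+h$ is bounded and $3\eps(1+h)<1$, Lemma~\ref{lem:domination} supplies the exponential moment.

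The core step is a lower bound on the unit-service gaps $\tfrac12\bigl(I_d((2j+1)\pi)-I_d(2j\pi)\bigr)$ and $\tfrac12\bigl(I_d((2j+1)\pi)-I_d((2j+2)\pi)\bigr)$. For this I use the closed form already derived in the proof of Lemma~\ref{lem:endpointgap}, which is valid for every integer $k$:
\[
 I_d(k\pi)=\frac{d}{k\pi(1+d^2)}+\frac{(1-d^2)\bigl(1-(-1)^ke^{-k\pi d}\bigr)}{k^2\pi^2(1+d^2)^2}.
\]
First I bound $d$. Using $\tan x\le x/(1-x^2/2)$ and $\pi<22/7$ gives $d\le 5/m=5/(1024L^3)$. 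Hence $k\pi d\le 2L\pi d<1/(30L^2)$ for all $k\le 2L$.

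For odd $k\le 2L-1$, the second term is at least $(1-d^2)/(k^2\pi^2(1+d^2)^2)$, and this is at least $(1-d^2)/(4L^2\pi^2(1+d^2)^2)$. For even $k\ge 2$, the elementary bound $1-e^{-x}\le x$ shows that $I_d(k\pi)\le 2d/(k\pi)\le d/\pi$, which is $O(L^{-3})$. Each of the $2N$ gaps therefore exceeds
\[
 \frac{1}{8\pi^2L^2}\cdot\frac{1-d^2}{(1+d^2)^2}-\frac{d}{2\pi}
 -\frac{d}{2\pi}\,,
\]
where the first $d/(2\pi)$ accounts for the odd-$k$ drift term's irrelevance (it has favourable sign) and the second accounts for the even term. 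With $\pi^2<10$ this is bounded below by roughly $1/(80L^2)-O(L^{-3})$, and I will check it is at least $1/(85L^2)$ for every $L\ge2$.

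It then remains to subtract two losses, both scaled by $L^{-2}$. The first is the service loss from Lemma~\ref{lem:service}: since $\Var(B)=h^2/3=1/(30000L^2)$, the loss is at most $1/(60000L^2)$. The second is the remainder from Corollary~\ref{cor:signtest}. With $b_2=1+h^2/3$ and $\eps\le 1/(10000L^2)$, it is at most
\[
 \frac{27\eps b_2}{2(1-3\eps)}\le\frac{1}{700L^2}.
\]
Since $1/85-1/60000-1/700>1/100$, both displayed inequalities in \eqref{eq:manygaps} follow.

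The main obstacle is careful bookkeeping. The rational bound $d\le 5/m$ must be verified uniformly in $L$. In addition, the even-$k$ terms and the $(1+d^2)$ factors must be controlled by explicit fractions rather than by asymptotics. I would handle this exactly as in \eqref{eq:64damping}: substitute a rational upper bound for $\pi/m$ into the increasing map $x\mapsto x/(1-x^2/2)$. After that, it suffices to check the final inequality at the worst case $L=2$, because every error term decays at least as fast as $L^{-2}$ relative to the main term.
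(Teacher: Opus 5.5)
Your proposal is correct and follows the paper's proof: lower-bound the unit-service gaps $\tfrac12\bigl(I_d(k\pi)-I_d(k'\pi)\bigr)$, lose at most $1/(60000L^2)$ to Lemma~\ref{lem:service}, and absorb the remainder, which is below $1/(700L^2)$, through Corollary~\ref{cor:signtest}. The only real difference is the core estimate. The paper does not use the closed form at multiples of $\pi$. It compares with the undamped integral $I_0(c)=(1-\cos c)/c^2$, whose odd-minus-even gap is exactly $1/(\pi^2(2j+1)^2)>1/(40L^2)$. It then charges damping through the uniform bound $|I_d(c)-I_0(c)|\le dc/6$ with $d<8/m$. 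Because $c$ is of order $L$, this costs $1/(96L^2)$, the same order as the main term, and that is what forces $m$ to grow like $L^3$. Your exact formula makes the damping cost only $O(d)=O(L^{-3})$, since the even-speed values are at most $d/\pi$; your estimate would therefore tolerate $m$ of order $L^2$. In exchange, you give back a factor two by bounding $1+e^{-k\pi d}$ below by $1$. Using $1+e^{-k\pi d}\ge2-k\pi d$ instead would recover the full $1/(40L^2)$ with a negligible correction. The paper's route needs only a Lipschitz-type estimate valid at every speed; yours is sharper but tied to integer multiples of $\pi$.

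There are three small bookkeeping points.

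First, the second $-d/(2\pi)$ is superfluous. The drift term $d/(k\pi(1+d^2))$ at odd $k$ is positive and can simply be dropped. As written, with $\pi^2<10$ and $\pi>3$, your normalized bound at $L=2$ is about $\tfrac1{80}-\tfrac{20}{24576}\approx0.01169$. This is slightly below $1/85\approx0.01176$, so the intermediate claim fails there. The conclusion survives, since $0.01168-\tfrac1{60000}-\tfrac1{700}>0.0102>\tfrac1{100}$; alternatively, just remove the extra term.

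Second, in the second line of \eqref{eq:manygaps} the larger coefficient occurs at the smaller speed. Corollary~\ref{cor:signtest} is stated only for $\Delta=A_B(c_2)-A_B(c_1)>0$ with $c_1<c_2$. Its proof uses only $R\ge0$ at one speed and the upper bound at the other, so it applies to either ordering, but you should say so, as the paper does.

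Third, reducing to $L=2$ is not justified by the losses being relatively smaller. The service and remainder losses are of the same order $L^{-2}$ as the main term. The correct justification is that every term, after multiplication by $L^2$, is monotone in $L$ in the favourable direction.
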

\begin{proof}
Let $d=\tan(\pi/m)+1/m$. The bounds $\tan x<2x$ for $0<x<\pi/4$ and $\pi<22/7$ imply $d<8/m$. Also
\[
 I_0(c)=\frac{1-\cos c}{c^2},\qquad
 |I_d(c)-I_0(c)|\le\frac{dc}{6},
\]
the second inequality following by integrating $(1-u)dcu$ over $[0,1]$.

Fix $j\le N$. At the odd speed the undamped coefficient exceeds that at either adjacent even speed by
\[
 \frac{1}{\pi^2(2j+1)^2}>\frac1{40L^2}.
\]
The total loss from damping, including the factor one half in \eqref{eq:Aunit}, is at most
\[
 \frac{d\pi(4j+3)}{12}<\frac{32L}{3m}=\frac1{96L^2}.
\]
The same upper bound covers the preceding even speed. By Lemma~\ref{lem:service}, changing the service law loses at most $\Var(B)/2=1/(60000L^2)$.

Since $b_2=1+1/(30000L^2)\le30001/30000$ and $\eps\le1/(10000L^2)$,
\[
 \frac{27\eps b_2}{2(1-3\eps)}<\frac{30001}{21000000L^2}.
\]
The sign test can compare any ordered pair of coefficient values; it does not require that the larger coefficient occur at the larger speed. For both differences in \eqref{eq:manygaps}, its normalized lower bound is greater than
\[
 \frac1{L^2}\left(\frac1{40}-\frac1{96}-\frac1{60000}
                         -\frac{30001}{21000000}\right)
 =\frac{275899}{21000000L^2}>\frac1{100L^2}.
\]
The remaining assertions follow from Lemmas~\ref{lem:domination} and \ref{lem:cycle}.
\end{proof}
\begin{remark}
The theorem gives $N$ strict peaks in a specified finite sequence of sampled means. It does not assert a derivative sign at every intervening speed, or infinitely many oscillations for a single fixed environment. The parameters depend on $N$.
\end{remark}

\section{A service-law characterization of second-order ordering}\label{sec:service}
We now separate the environmental spectrum from the service law. Let $F$ be bounded, stationary and mean-square continuous, let $a=\E F(0)$, and let $K(t)=\Cov(F(0),F(t))$. The spectral representation of a continuous stationary covariance \cite{Yaglom87} provides a finite symmetric positive measure $\mu$ such that
\begin{equation}\label{eq:spectral}
 K(t)=\int_{\R}\cos(\omega t)\,\mu(d\omega).
\end{equation}
No reversibility is required.

\begin{lemma}[The service spectral multiplier]\label{lem:servicefilter}
For $B\ge0$ with $0<b_1=\E B$ and $b_2=\E B^2<\infty$, define $H_B$ by \eqref{eq:Hintro} and extend it evenly to $\R$. Then $H_B$ is continuous, $0\le H_B\le b_2/2$, and
\begin{align}\label{eq:servicefilter}
 H_B(s)&=\int_0^\infty k_B(u)\cos(su)\,du,\notag\\
 A_B(c)&=\frac{a^2b_1b_2}{2}
                   +b_1\int_{\R}H_B(c\omega)\,\mu(d\omega).
\end{align}
\end{lemma}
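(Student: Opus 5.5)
The plan is to establish the cosine-transform identity for $H_B$ first, then read off continuity and the bounds, and finally substitute the spectral representation \eqref{eq:spectral} into the definition \eqref{eq:coefficient} of $A_B$.

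\emph{Step 1: the identity $H_B(s)=\int_0^\infty k_B(u)\cos(su)\,du$.} For a fixed value $b\ge0$, integrate by parts twice, or simply compute directly:
\[
 \int_0^b (b-u)\cos(su)\,du=\frac{1-\cos(sb)}{s^2}\qquad(s>0).
\]
At $s=0$ the same integral equals $b^2/2$. Then take expectations over $B$. The interchange is justified by Fubini/Tonelli, because $\int_0^\infty\E(B-u)_+\,du=b_2/2<\infty$, as noted in \eqref{eq:shotmean}. This gives the first line of \eqref{eq:servicefilter}, including the value at $s=0$.

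\emph{Step 2: continuity and bounds.} Since $k_B\ge0$ is integrable, dominated convergence shows that $s\mapsto\int k_B(u)\cos(su)\,du$ is continuous on $\R$, and it is even. The upper bound follows from $|\cos|\le1$:
\[
 H_B\le\int k_B=b_2/2.
\]
The lower bound $H_B\ge0$ is immediate from the original expression $\E(1-\cos(sB))/s^2$.

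\emph{Step 3: the formula for $A_B$.} Write $\E[F(0)F(-cu)]=a^2+K(cu)$, using that $K$ is even. The constant part contributes
\[
 b_1a^2\int_0^\infty k_B=a^2b_1b_2/2.
\]
For the covariance part, insert \eqref{eq:spectral}:
\[
 b_1\int_0^\infty k_B(u)\int_\R\cos(c\omega u)\,\mu(d\omega)\,du.
\]
Exchange the order of integration by Fubini. This is legitimate because $|k_B(u)\cos(\cdot)|\le k_B(u)$, $k_B$ is integrable, and $\mu$ is a finite measure. The inner $u$-integral is then $H_B(c\omega)$ by Step 1, which yields the second line of \eqref{eq:servicefilter}.

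The main point needing care is the justification of the interchanges, that is, integrability of $k_B$ and finiteness of $\mu$, together with the case $\omega=0$ (an atom of $\mu$ at zero), where $H_B(0)=b_2/2$ must agree with the continuous extension. Step 1 already covers this, since the identity holds at $s=0$ as well. Mean-square continuity is used only to guarantee, via \cite{Yaglom87}, that the finite spectral measure exists.
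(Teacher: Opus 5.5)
Your proposal is correct and follows essentially the same route as the paper. Both arguments use the pointwise identity $\int_0^b(b-u)\cos(su)\,du=(1-\cos(sb))/s^2$, then Fubini with $\int_0^\infty k_B=b_2/2$, dominated convergence for continuity, and a second Fubini after inserting \eqref{eq:spectral}, justified by the finite mass of $\mu$. The only cosmetic difference is that you get the upper bound from $|\cos|\le1$ in the integral representation, whereas the paper applies $1-\cos x\le x^2/2$ directly to $\E(1-\cos(sB))/s^2$.
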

\begin{proof}
For fixed $b\ge0$ and $s\ne0$, integration by parts gives
\[
 \int_0^b(b-u)\cos(su)\,du=\frac{1-\cos(sb)}{s^2}.
\]
The absolute integral is at most $b^2/2$, so Fubini applies after expectation. The inequality $1-\cos x\le x^2/2$ gives the bound on $H_B$, and dominated convergence gives continuity, including at zero. Insert \eqref{eq:spectral} into \eqref{eq:kernelintro}; the finite mass of $\mu$ and $\int k_B=b_2/2$ justify the second use of Fubini.
\end{proof}

\begin{theorem}[Universal second-order service criterion]\label{thm:servicecriterion}
Fix a nonnegative service distribution with $0<\E B$ and $\E B^2<\infty$. The following are equivalent:
\begin{enumerate}
\item $H_B(s)$ is nonincreasing on $[0,\infty)$.
\item For every bounded nonnegative mean-square-continuous stationary environment of positive mean, $A_B(c)$ is nonincreasing on $(0,\infty)$.
\item The preceding assertion holds for every strictly positive stationary finite-state Markov intensity with positive transition rates between every pair of distinct states.
\end{enumerate}
If these conditions fail, a finite-state environment of the class in (iii) has $w_\eps(c_2)>w_\eps(c_1)$ for some $0<c_1<c_2$ and all sufficiently small $\eps>0$.
\end{theorem}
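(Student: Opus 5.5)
The argument runs through the spectral formula of Lemma~\ref{lem:servicefilter}: $A_B(c)=\tfrac12a^2b_1b_2+b_1\int_{\R}H_B(c\omega)\,\mu(d\omega)$, with $\mu$ finite, symmetric and positive. I would prove (i)$\Rightarrow$(ii)$\Rightarrow$(iii)$\Rightarrow$(i), and then obtain the final workload assertion from Corollary~\ref{cor:signtest}.

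\emph{(i)$\Rightarrow$(ii).} Since $H_B$ is even and nonincreasing on $[0,\infty)$, the map $c\mapsto H_B(c\omega)=H_B(c|\omega|)$ is nonincreasing in $c>0$ for every fixed $\omega$. The atom of $\mu$ at $\omega=0$ contributes the constant $H_B(0)$. Integrating against the positive measure $\mu$ preserves monotonicity, and $b_1>0$, so $A_B$ is nonincreasing.

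\emph{(ii)$\Rightarrow$(iii).} This is immediate. A finite-state stationary Markov intensity is bounded and nonnegative, and it has positive mean because it is strictly positive. It is also mean-square continuous, since $\E[(F(t)-F(0))^2]\to0$ follows from continuity of the finite transition semigroup.

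\emph{(iii)$\Rightarrow$(i).} Suppose (i) fails. Then there are $0\le s_1<s_2$ with $H_B(s_2)>H_B(s_1)$. By continuity of $H_B$ we may take $s_1>0$, keeping the strict inequality. I would reuse the cyclic environment of Section~\ref{sec:cycle}. By Lemma~\ref{lem:cycle}, $K(t)=\tfrac12e^{-d|t|}\cos t$, whose spectral measure is a Cauchy density centred at $\pm1$ with width $d=\tan(\pi/m)+\eta$. Choosing $m$ large and $\eta$ small makes $d$ as small as we like, while all off-diagonal rates stay positive and $F\in(1,3)$. Then
\[
 \int H_B(c\omega)\,\mu_d(d\omega)\longrightarrow\tfrac12H_B(c)\qquad(d\downarrow0)
\]
for each fixed $c$. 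This follows by weak convergence of $\mu_d$ to $\tfrac14(\delta_1+\delta_{-1})$ together with boundedness and continuity of $H_B$. A cleaner route avoids spectral measures entirely: use the time-domain formula with $k_B$ integrable and dominated convergence, since $e^{-dcu}\cos(cu)\to\cos(cu)$.

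Taking $c_1=s_1$ and $c_2=s_2$, for small enough $d$ we get $A_B(c_2)-A_B(c_1)$ close to $\tfrac12b_1\bigl(H_B(s_2)-H_B(s_1)\bigr)>0$. So $A_B$ is not nonincreasing for this environment, which belongs to the class in (iii). This contradicts (iii).

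\emph{Final assertion.} The environment just built satisfies $\Delta=A_B(c_2)-A_B(c_1)>0$ and $0\le F\le M=3$. Corollary~\ref{cor:signtest} then gives $w_\eps(c_2)>w_\eps(c_1)$ for all $\eps$ below the threshold \eqref{eq:epsthreshold}. That corollary needs only $b_2<\infty$, by Theorem~\ref{thm:expansion}.

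The main obstacle is the limiting step in (iii)$\Rightarrow$(i). One must ensure the approximation is uniform enough at the two fixed speeds. This means controlling $|A_B^{(d)}(c)-A_B^{(0)}(c)|\le b_1\int k_B(u)\,\tfrac12(1-e^{-dcu})\,du$. That quantity tends to $0$ by dominated convergence using $\int k_B=b_2/2$, and it is even bounded by $\tfrac12 b_1 dc\int u\,k_B(u)\,du$ when $B$ has a third moment, though dominated convergence suffices in general.
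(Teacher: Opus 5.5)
Your proposal is correct and is essentially the paper's proof. It uses the same spectral monotonicity argument for (i)$\Rightarrow$(ii), the same cyclic environment with damping $d\to0$ and dominated convergence (dominating function $k_B$) for the converse, and Corollary~\ref{cor:signtest} for the workload reversal; the only cosmetic difference is that you rule out a violating pair with $s_1=0$ by continuity of $H_B$, whereas the paper notes such a pair cannot occur because $H_B(0)=b_2/2$ is a global upper bound.
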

\begin{proof}
If (i) holds, each integrand $H_B(c\omega)$ in \eqref{eq:servicefilter} is nonincreasing in $c$, because $H_B$ is even. Positivity of $\mu$ proves (ii). Finite-state continuous-time Markov chains are mean-square continuous, so (ii) implies (iii).

For the converse, suppose that $H_B(s_2)>H_B(s_1)$ for some $0<s_1<s_2$. Such a pair exists whenever (i) fails: $H_B(0)=b_2/2$ is a global upper bound, so a violating pair cannot require its smaller argument to be zero. For integers $m\to\infty$, use \eqref{eq:generator} with $\eta=1/m$. By Lemma~\ref{lem:cycle}, the corresponding coefficients satisfy
\[
 A_B^{(m)}(c)=2b_1b_2+\frac{b_1}{2}
       \int_0^\infty k_B(u)e^{-d_mcu}\cos(cu)\,du,
 \qquad d_m=\tan(\pi/m)+1/m\longrightarrow0.
\]
Dominated convergence, with dominating function $k_B$, yields
\[
 A_B^{(m)}(s_2)-A_B^{(m)}(s_1)
 \longrightarrow\frac{b_1}{2}\bigl(H_B(s_2)-H_B(s_1)\bigr)>0.
\]
A sufficiently large finite $m$ contradicts (iii). Its intensity is injective, strictly positive and Markov, and all rates are positive. Corollary~\ref{cor:signtest} then gives the final assertion for the same finite $m$ at all sufficiently small traffic levels.
\end{proof}

\begin{corollary}[Mixtures of exponential service distributions]\label{cor:exponential}
Suppose the survival function of $B$ is
\[
 \Pp(B>u)=\int_{(0,\infty)}e^{-qu}\,\nu(dq),
\]
where $\nu$ is a probability measure and $\int q^{-2}\nu(dq)<\infty$. Then
\begin{equation}\label{eq:exponentialfilter}
 H_B(s)=\int_{(0,\infty)}\frac{1}{q^2+s^2}\,\nu(dq),
\end{equation}
and the equivalent conditions of Theorem~\ref{thm:servicecriterion} hold.
\end{corollary}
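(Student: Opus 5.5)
The plan is to compute $H_B$ through the kernel representation of Lemma~\ref{lem:servicefilter}, namely $H_B(s)=\int_0^\infty k_B(u)\cos(su)\,du$. We then read off monotonicity from the resulting Stieltjes-type formula.

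\emph{Kernel as a mixture.} First express $k_B$ as a mixture of exponentials. Since $k_B(u)=\E(B-u)_+=\int_u^\infty\Pp(B>x)\,dx$, Tonelli applied to the survival representation gives
\[
 k_B(u)=\int_{(0,\infty)}\int_u^\infty e^{-qx}\,dx\,\nu(dq)=\int_{(0,\infty)}\frac{e^{-qu}}{q}\,\nu(dq).
\]
The consistency check $\int_0^\infty k_B=\int q^{-2}\nu(dq)=b_2/2$ shows that the hypothesis $\int q^{-2}\nu(dq)<\infty$ is exactly $b_2<\infty$. It also yields $b_1=\int q^{-1}\nu(dq)<\infty$, with $b_1>0$ because $\nu$ is a probability measure on $(0,\infty)$. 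Hence the standing assumptions of Theorem~\ref{thm:servicecriterion} are met.

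\emph{Computing $H_B$.} Substitute this into the cosine integral. The double integral $\int_0^\infty\int q^{-1}e^{-qu}|\cos(su)|\,\nu(dq)\,du$ is bounded by $\int q^{-2}\nu(dq)<\infty$, so Fubini applies. The elementary Laplace transform $\int_0^\infty e^{-qu}\cos(su)\,du=q/(q^2+s^2)$ then gives
\[
 H_B(s)=\int\frac1q\cdot\frac{q}{q^2+s^2}\,\nu(dq),
\]
which is \eqref{eq:exponentialfilter}. At $s=0$ this reduces to $b_2/2$, consistent with the convention in \eqref{eq:Hintro} and with the continuity of $H_B$ from Lemma~\ref{lem:servicefilter}.

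\emph{Monotonicity.} For each $q>0$, the function $s\mapsto 1/(q^2+s^2)$ is nonincreasing on $[0,\infty)$, and $\nu\ge0$. Therefore $H_B$ is nonincreasing, which is condition (i) of Theorem~\ref{thm:servicecriterion}. The equivalent conditions (ii) and (iii) then follow from that theorem.

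There is no real obstacle here. The only point needing care is justifying the Fubini exchanges, and both are dominated by $\int q^{-2}\nu(dq)$. Alternatively, one could compute $\E(1-\cos(sB))$ directly by integrating $\sin(sx)\,s\,\Pp(B>x)$, but the kernel route is shorter.
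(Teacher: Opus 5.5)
Your proof is correct and is essentially the paper's argument: both rest on linearity of $H_B$ in the mixing measure $\nu$ combined with the exponential case, with the exchange of integrals justified by domination by $\int q^{-2}\,\nu(dq)$. The only difference is cosmetic. You write the kernel itself as a mixture, $k_B(u)=\int q^{-1}e^{-qu}\,\nu(dq)$, and apply a Laplace transform through the cosine representation of Lemma~\ref{lem:servicefilter}. The paper instead conditions on the rate $q$, uses the exponential multiplier $(q^2+s^2)^{-1}$, integrates it against $\nu$, and obtains $b_1<\infty$ by Cauchy--Schwarz.
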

\begin{proof}
Conditional on rate $q$, the service law is exponential and its multiplier is $(q^2+s^2)^{-1}$. Integrating this nonnegative expression gives \eqref{eq:exponentialfilter}; it is nonincreasing in $s\ge0$. The moment assumption gives $b_2=2\int q^{-2}\nu(dq)<\infty$ and $b_1<\infty$ by Cauchy--Schwarz.
\end{proof}
The representation of completely monotone functions as exponential mixtures is treated in \cite{SSV12}; no such characterization is needed for the direct calculation above. Corollary~\ref{cor:exponential} is only a second-order statement. It does not establish \eqref{eq:ross} at an arbitrary fixed traffic level.

For deterministic $B=b>0$, $H_B(2\pi/b)=0$ and $H_B(3\pi/b)>0$. Thus the criterion fails. The explicit continuous-service theorem shows that this obstruction is not confined to point-mass service distributions.

\section{The variance criterion in light traffic}\label{sec:variance}
For unit service, write $A_F(c)$ for the coefficient $A_1(c)$ of the general environment $F$, and define
\begin{equation}\label{eq:average}
 \overline F_c=\frac1c\int_0^c F(s)\,ds,
 \qquad K(t)=\Cov(F(0),F(t)).
\end{equation}
Boundedness makes these quantities square-integrable. Stationarity implies that $K$ is even, without any reversibility assumption.

\begin{proposition}[Averaged-environment formula]\label{prop:variance}
For every $c>0$,
\begin{equation}\label{eq:variance}
 A_F(c)=\frac12\E\overline F_c^{\,2}
       =\frac{a^2}{2}+\frac12\Var(\overline F_c).
\end{equation}
Consequently
\begin{equation}\label{eq:varianceexpansion}
 w_\eps(c)=\frac{\eps a}{2}
 +\frac{\eps^2}{2}\bigl(a^2+\Var(\overline F_c)\bigr)+R_{\eps,c},
\end{equation}
with the same nonnegative uniform remainder as in \eqref{eq:unitexpansion}. For any fixed $c_1,c_2>0$,
\begin{equation}\label{eq:limitdiff}
 \lim_{\eps\downarrow0}
 \frac{w_\eps(c_2)-w_\eps(c_1)}{\eps^2}
 =\frac12\left(\Var(\overline F_{c_2})-\Var(\overline F_{c_1})\right).
\end{equation}
\end{proposition}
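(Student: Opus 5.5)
The plan is to compute $\E\overline F_c^{\,2}$ directly as a double integral and match it with the unit-service coefficient from \eqref{eq:unitexpansion}. For $B\equiv1$ we have $b_1=1$ and $k_B(u)=(1-u)_+$. Hence
\[
 A_F(c)=\int_0^1(1-u)\,C(cu)\,du,\qquad C(t)=\E[F(0)F(t)],
\]
where stationarity gives $\E[F(0)F(-cu)]=C(cu)$ and $C$ is even. Boundedness of $F$ lets Fubini apply:
\[
 \E\overline F_c^{\,2}=\frac1{c^2}\int_0^c\!\!\int_0^c C(s-r)\,ds\,dr .
\]
Reduce the double integral to the diagonal variable $t=s-r$. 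The set $\{(r,s)\in[0,c]^2: s-r\in dt\}$ has length $(c-|t|)\,dt$, so
\[
 \E\overline F_c^{\,2}=\frac1{c^2}\int_{-c}^{c}(c-|t|)C(t)\,dt=\frac{2}{c^2}\int_0^c(c-t)C(t)\,dt,
\]
using evenness of $C$. The substitution $t=cu$ turns this into $2\int_0^1(1-u)C(cu)\,du=2A_F(c)$, which is the first equality of \eqref{eq:variance}. The second equality is just $\E\overline F_c^{\,2}=(\E\overline F_c)^2+\Var(\overline F_c)$ together with $\E\overline F_c=a$ by stationarity. Joint measurability and boundedness make $\overline F_c$ a well-defined square-integrable variable, so every step above is legitimate.

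Next, substitute \eqref{eq:variance} into \eqref{eq:unitexpansion} to obtain \eqref{eq:varianceexpansion} with the same remainder $R_{\eps,c}$. For the limit \eqref{eq:limitdiff}, the first-order term $\eps a/2$ and the $a^2$ part cancel in the difference. What remains is $\tfrac12(\Var(\overline F_{c_2})-\Var(\overline F_{c_1}))$ plus $(R_{\eps,c_2}-R_{\eps,c_1})/\eps^2$. By \eqref{eq:unitexpansion}, the latter is bounded in absolute value by $\eps M^3/(2(1-\eps M))$, which tends to $0$.

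There is no real obstacle here. The only point needing care is the change of variables in the double integral and the use of evenness of $C$ for a general stationary, not necessarily reversible, process. That evenness holds because $\E[F(0)F(-t)]=\E[F(t)F(0)]$ by stationarity.
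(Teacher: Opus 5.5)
Your proposal is correct and follows essentially the same route as the paper: Fubini and stationarity reduce $\E\overline F_c^{\,2}$ to $\frac{2}{c^2}\int_0^c(c-v)C(v)\,dv$, rescaling gives $2A_F(c)$, and the nonnegative, speed-uniform $O(\eps^3)$ remainder in \eqref{eq:unitexpansion} then gives both the expansion and the limit. You also make explicit the evenness of $C$, which matches $\E[F(0)F(-cu)]$ with $\E[F(0)F(cu)]$ and which the paper leaves implicit.
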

\begin{proof}
By Fubini and stationarity,
\begin{align*}
 \E\overline F_c^{\,2}
 &=\frac1{c^2}\int_0^c\int_0^c\E[F(s)F(t)]\,ds\,dt\\
 &=\frac2{c^2}\int_0^c(c-v)\E[F(0)F(v)]\,dv\\
 &=2\int_0^1(1-u)\E[F(0)F(cu)]\,du.
\end{align*}
This is twice $A_F(c)$. Since $\E\overline F_c=a$, \eqref{eq:variance} follows. The expansion is Theorem~\ref{thm:expansion}, and its uniform $O(\eps^3)$ bound proves \eqref{eq:limitdiff}.
\end{proof}

\begin{corollary}\label{cor:variancecriterion}
If $\Var(\overline F_{c_2})>\Var(\overline F_{c_1})$ for some $0<c_1<c_2$, then $w_\eps(c_2)>w_\eps(c_1)$ for every sufficiently small positive $\eps$. A quantitative sufficient condition is
\begin{equation}\label{eq:varianceeps}
 \frac{\eps M^3}{2(1-\eps M)}
 <\frac12\left(\Var(\overline F_{c_2})-\Var(\overline F_{c_1})\right).
\end{equation}
If \eqref{eq:ross} holds for all sufficiently small $\eps>0$, then $c\mapsto\Var(\overline F_c)$ is nonincreasing.
\end{corollary}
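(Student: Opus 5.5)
The corollary is a direct combination of Proposition~\ref{prop:variance} with the sign test of Corollary~\ref{cor:signtest}, specialized to unit service ($b_1=b_2=1$). The plan is to work directly with the expansion \eqref{eq:varianceexpansion} at the two speeds $c_1<c_2$.

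First, subtract the two expansions. The first-order term $\eps a/2$ and the constant $\eps^2a^2/2$ do not depend on $c$, so they cancel, leaving
\[
 w_\eps(c_2)-w_\eps(c_1)=\frac{\eps^2}{2}\bigl(\Var(\overline F_{c_2})-\Var(\overline F_{c_1})\bigr)+R_{\eps,c_2}-R_{\eps,c_1}.
\]
Using $R_{\eps,c_2}\ge0$ and the uniform bound $R_{\eps,c_1}\le\eps^3M^3/(2(1-\eps M))$ from \eqref{eq:unitexpansion}, the difference is at least $\eps^2$ times
\[
 \tfrac12\bigl(\Var(\overline F_{c_2})-\Var(\overline F_{c_1})\bigr)-\frac{\eps M^3}{2(1-\eps M)}.
\]
This quantity is strictly positive exactly under \eqref{eq:varianceeps}, which gives the quantitative claim. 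Here one needs $\eps M<1$ so that Theorem~\ref{thm:expansion} applies. Condition \eqref{eq:varianceeps} only makes sense in that range, and it should be read together with \eqref{eq:domstability}.

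Second, for the qualitative claim, note that the left side of \eqref{eq:varianceeps} tends to $0$ as $\eps\downarrow0$. So whenever the variance gap is positive, the inequality holds for all sufficiently small $\eps>0$. Alternatively, one can read this off directly from the limit \eqref{eq:limitdiff}.

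Third, the final assertion is the contrapositive. Suppose \eqref{eq:ross} holds for all small $\eps$ but $c\mapsto\Var(\overline F_c)$ is not nonincreasing. Then there exist $c_1<c_2$ with $\Var(\overline F_{c_2})>\Var(\overline F_{c_1})$, and the first part yields $w_\eps(c_2)>w_\eps(c_1)$ for small $\eps$, contradicting \eqref{eq:ross}.

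There is no real obstacle here. The only point requiring care is that the remainder bound is uniform in $c$, which Theorem~\ref{thm:expansion} already supplies, together with restricting to $\eps<1/M$.
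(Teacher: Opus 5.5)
Your proposal is correct and follows the paper's proof: the paper also combines Proposition~\ref{prop:variance} with the sign test of Corollary~\ref{cor:signtest} at unit service ($b_1=b_2=1$), and you are right to note the implicit restriction $\eps M<1$. The paper obtains the necessary condition by dividing the assumed inequality by $\eps^2$ and letting $\eps\downarrow0$, using \eqref{eq:limitdiff}; your contrapositive through the first part is an equivalent route.
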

\begin{proof}
Apply Corollary~\ref{cor:signtest} and Proposition~\ref{prop:variance}. For the necessary condition, divide the assumed workload inequality by $\eps^2$ and let $\eps\downarrow0$ for each fixed pair $c_1<c_2$.
\end{proof}
This is a characterization of the second-order ordering, not of the full workload ordering at a fixed traffic level. When two coefficients are equal, the formula does not determine the sign of the difference of the remainders.

\begin{proposition}[A covariance test]\label{prop:derivative}
Assume that $K$ is continuous. Then
\begin{equation}\label{eq:varderivative}
 \frac{d}{dc}\Var(\overline F_c)
 =\frac2{c^3}\int_0^c(2v-c)K(v)\,dv.
\end{equation}
Thus $c\mapsto\Var(\overline F_c)$ is nonincreasing if and only if the integral in \eqref{eq:varderivative} is nonpositive for every $c>0$. In particular, nonincreasing $K$ on $[0,\infty)$ is sufficient.
\end{proposition}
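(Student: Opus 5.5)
The plan is to start from the double-integral form of the variance and differentiate it explicitly. By Fubini and stationarity, as in the proof of Proposition~\ref{prop:variance} but with $K$ in place of the uncentered correlation,
\[
 \Var(\overline F_c)=\frac{2}{c^2}\int_0^c(c-v)K(v)\,dv
 =\frac{2}{c}\int_0^cK(v)\,dv-\frac{2}{c^2}\int_0^c vK(v)\,dv .
\]
This uses only $\Cov(F(s),F(t))=K(t-s)$, the evenness of $K$, and boundedness of $F$, which makes everything integrable.

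Because $K$ is continuous, both $c\mapsto\int_0^cK$ and $c\mapsto\int_0^c vK(v)\,dv$ are $C^1$, with derivatives $K(c)$ and $cK(c)$. Differentiating with the product rule gives
\[
 \frac{d}{dc}\Var(\overline F_c)=-\frac{2}{c^2}\int_0^cK+\frac{2K(c)}{c}+\frac{4}{c^3}\int_0^c vK(v)\,dv-\frac{2cK(c)}{c^2}.
\]
The two boundary terms in $K(c)$ cancel. What remains is $\frac{2}{c^3}\int_0^c(2v-c)K(v)\,dv$, which is \eqref{eq:varderivative}.

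The equivalence then follows because $c\mapsto\Var(\overline F_c)$ is $C^1$ on $(0,\infty)$. A $C^1$ function is nonincreasing exactly when its derivative is nonpositive everywhere, by the mean value theorem in one direction and the definition of the derivative in the other. Since $2/c^3>0$, this reduces to the stated sign condition on the integral.

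For the sufficiency of a nonincreasing $K$, the step I expect to need the only real idea is a symmetry argument. Write
\[
 \int_0^c(2v-c)K(v)\,dv=\int_0^{c/2}(2v-c)\bigl(K(v)-K(c-v)\bigr)\,dv,
\]
using the substitution $v\mapsto c-v$ on $[c/2,c]$, which sends the weight $2v-c$ to $-(2v-c)$. On $[0,c/2]$ we have $2v-c\le0$ and $v\le c-v$, so $K(v)\ge K(c-v)$ when $K$ is nonincreasing. The integrand is therefore nonpositive, and so is the integral. An equivalent route is to note that the weight $2v-c$ has zero integral over $[0,c]$ and is increasing, so it is negatively correlated with a nonincreasing $K$ (Chebyshev's integral inequality).
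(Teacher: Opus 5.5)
Your proposal is correct and follows the paper's proof essentially step for step. It uses the same representation $\Var(\overline F_c)=\frac{2}{c^2}\int_0^c(c-v)K(v)\,dv$, direct differentiation, and the same pairing of $v$ with $c-v$ to show that a nonincreasing $K$ suffices. You also spell out the cancellation of the boundary terms and the mean-value-theorem justification of the equivalence, both of which the paper leaves implicit.
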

\begin{proof}
From \eqref{eq:variance},
\[
 \Var(\overline F_c)=\frac2{c^2}\int_0^c(c-v)K(v)\,dv.
\]
Differentiate to obtain \eqref{eq:varderivative}. If $K$ is nonincreasing, pair the points $v$ and $c-v$:
\[
 \int_0^c(2v-c)K(v)\,dv
 =\int_0^{c/2}(2v-c)\bigl(K(v)-K(c-v)\bigr)\,dv\le0.
\]
\end{proof}

\begin{corollary}[Reversible environments at second order]\label{cor:reversible}
Let $X$ be a stationary, irreducible, reversible finite-state continuous-time Markov chain, and let $F(t)=f(X_t)$ be bounded and nonnegative. Then $A_F(c)$ is nonincreasing. If $f$ is nonconstant, it is strictly decreasing. In that case, for every fixed $0<c_1<c_2$, there is $\eps_*(c_1,c_2)>0$ such that
\[
 w_\eps(c_1)>w_\eps(c_2),\qquad 0<\eps<\eps_*(c_1,c_2).
\]
\end{corollary}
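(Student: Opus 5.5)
For a reversible finite-state chain I would compute the covariance spectrally and then apply Proposition~\ref{prop:variance} together with Corollary~\ref{cor:variancecriterion}.

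\textbf{Spectral form of the covariance.} Let $\pi$ be the stationary law and $Q$ the generator. Reversibility makes $Q$ self-adjoint on $L^2(\pi)$. It therefore has an orthonormal eigenbasis $\{\phi_k\}$ with real eigenvalues $0=-\gamma_0>-\gamma_1\ge\dots$. Irreducibility gives $\gamma_k>0$ for $k\ge1$. Expanding the centered function $f-a$ in this basis yields
\[
 K(t)=\sum_{k\ge1}\beta_k e^{-\gamma_k|t|},\qquad \beta_k=\langle f,\phi_k\rangle_\pi^2\ge0.
\]
At least one $\beta_k$ is positive when $f$ is nonconstant. This is a nonnegative combination of decaying exponentials.

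\textbf{Monotonicity of the coefficient.} By Proposition~\ref{prop:variance},
\[
 A_F(c)=\frac{a^2}{2}+\sum_k\beta_k\int_0^1(1-u)e^{-\gamma_k cu}\,du .
\]
For $\gamma_k>0$, each integral is strictly decreasing in $c$, since the integrand strictly decreases pointwise for $u\in(0,1]$. Hence $A_F$ is nonincreasing. If $f$ is nonconstant, some $\beta_k>0$, and $A_F$ is strictly decreasing. Alternatively, $K$ is nonincreasing on $[0,\infty)$, so Proposition~\ref{prop:derivative} gives the weak version. I prefer the direct termwise argument because it also yields strictness.

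\textbf{Workload ordering.} Fix $0<c_1<c_2$. Then $\Delta=A_F(c_1)-A_F(c_2)>0$. Corollary~\ref{cor:signtest}, with the roles of the two speeds exchanged, gives $w_\eps(c_1)-w_\eps(c_2)>0$ for all $\eps$ below the explicit threshold \eqref{eq:epsthreshold}. Here $M=\max f$, $b_1=b_2=1$, and the threshold is taken as $\eps_*$. The sign test compares any ordered pair of coefficient values, as already noted in the proof of Theorem~\ref{thm:many}.

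\textbf{Main obstacle.} The main obstacle is only the reversibility step: justifying the real spectral decomposition with nonnegative weights. Self-adjointness of $e^{tQ}$ in $L^2(\pi)$ gives
\[
 \Cov(f(X_0),f(X_t))=\langle f-a,\,e^{tQ}(f-a)\rangle_\pi ,
\]
and diagonalizing $e^{tQ}$ produces exactly the squared-coefficient form above. The degenerate case $M=0$, where $f\equiv0$, is excluded by nonconstancy in the strict statement. In the weak statement it is trivial.
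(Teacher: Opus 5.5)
Your proposal is correct and follows essentially the paper's proof: the same self-adjoint spectral expansion $K(t)=\sum_j\alpha_j^2e^{-\gamma_j|t|}$, the representation $A_F(c)=a^2/2+\int_0^1(1-u)K(cu)\,du$, and the sign test applied in the direction $w_\eps(c_1)-w_\eps(c_2)$ with $b_1=b_2=1$ to produce $\eps_*$. The only cosmetic difference is how strictness is shown. The paper gets weak monotonicity from Proposition~\ref{prop:derivative} and strictness by differentiating $A_F$. You instead read strictness off termwise, from the strict decrease in $c$ of $(1-u)e^{-\gamma_k cu}$ for $u\in(0,1)$; note that at $u=1$ this integrand vanishes, so the interval should be open at $1$, not $(0,1]$ as you wrote.
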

\begin{proof}
The generator is self-adjoint in the invariant $L^2$ space; see \cite{Kelly79,LevinPeres17} for the reversibility and spectral background. Expand $f-\E f$ in an orthonormal basis of nonconstant eigenfunctions. For positive eigenvalues $\gamma_j$ of the negative generator,
\begin{equation}\label{eq:revcov}
 K(t)=\sum_j\alpha_j^2 e^{-\gamma_j t},\qquad t\ge0.
\end{equation}
It is nonincreasing, so Proposition~\ref{prop:derivative} applies. If $f$ is nonconstant, some $\alpha_j\ne0$, and differentiation of
\[
 A_F(c)=\frac{a^2}{2}+\int_0^1(1-u)K(cu)\,du
\]
shows that $A_F'(c)<0$. For a fixed pair $c_1<c_2$, let $\Delta=A_F(c_1)-A_F(c_2)>0$. Apply the expansion and its nonnegative remainder bound in the direction $w_\eps(c_1)-w_\eps(c_2)$; the same bound as \eqref{eq:epsthreshold}, with $b_1=b_2=1$, supplies $\eps_*$.
\end{proof}
The corollary is a light-traffic statement for each fixed pair of speeds. It neither asserts the all-traffic ordering for arbitrary reversible environments nor gives a necessary condition for that ordering. In the present construction, nonreversibility permits the damped oscillation in \eqref{eq:covariance}, which is excluded by \eqref{eq:revcov}.

\section{Arriving-customer waiting time}\label{sec:palm}
Let $d_\eps(c)$ be the arrival-Palm mean of the prearrival workload, equivalently the mean waiting time before service of a typical customer in the stationary first-come, first-served queue. Standard marked compensation and Palm formulas are given in \cite{Bremaud81,BB03,LP18}. In the present model they give
\begin{equation}\label{eq:palm}
 d_\eps(c)=\frac{\E[\lambda_{\eps,c}(0)V(0)]}{\eps a}.
\end{equation}
Indeed, the expected sum of prearrival workloads over arrivals in $(0,T]$ equals both $T\eps a\,d_\eps(c)$ and $T\E[\lambda_{\eps,c}(0)V(0)]$. Since $\lambda\le\eps M$ and $\E V<\infty$, these quantities are finite.

\begin{proposition}\label{prop:palm}
Under the assumptions of Theorem~\ref{thm:expansion},
\begin{align}\label{eq:palmbalance}
 w_\eps(c)&=\frac{\eps ab_2}{2}+\eps ab_1d_\eps(c),\notag\\
 d_\eps(c_2)-d_\eps(c_1)
 &=\frac{w_\eps(c_2)-w_\eps(c_1)}{\eps ab_1}.
\end{align}
Thus workload, waiting-time and sojourn-time mean comparisons have the same sign when only $c$ changes.
\end{proposition}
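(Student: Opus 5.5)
The plan is to combine the workload balance identity \eqref{eq:balance} with the Palm identity \eqref{eq:palm}. First, recall that \eqref{eq:balance} was proved for bounded service. It reads
\[
 w_\eps(c)=\frac{\eps a b_2}{2}+b_1\E[\lambda_{\eps,c}(0)V(0)].
\]
Substituting $\E[\lambda_{\eps,c}(0)V(0)]=\eps a\,d_\eps(c)$ from \eqref{eq:palm} gives the first line of \eqref{eq:palmbalance} at once. The second line follows by subtracting the first line at $c_2$ and $c_1$. The term $\eps a b_2/2$ does not depend on $c$, and $\eps a b_1>0$, so we may divide by it. The sign statement for waiting times is then immediate. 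For sojourn times, the FCFS sojourn time of a typical customer is its waiting time plus its own service requirement. That requirement is independent of the prearrival workload and has mean $b_1$, independent of $c$. So the mean sojourn difference equals the waiting difference.

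The substantive step is justifying \eqref{eq:balance} and \eqref{eq:palm} under only $b_2<\infty$, since the paper derived the balance identity for bounded $B$. I would use the truncation of the completion of Theorem~\ref{thm:expansion}. With marks $B_i\wedge n$, the bounded-service identities give
\[
 w_n=\frac{\eps a b_{2,n}}2+b_{1,n}\E[\lambda_{\eps,c}(0)V_n(0)].
\]
Two monotone convergences then apply: $V_n\uparrow V$ pointwise and $\lambda\ge0$. Together with $b_{1,n}\uparrow b_1$, $b_{2,n}\uparrow b_2$ and $w_n\uparrow w_\eps(c)<\infty$, they pass this identity to the limit. The limiting expectation is finite because $\lambda\le\eps M$ and $\E V\le\E\widehat V<\infty$.

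For \eqref{eq:palm} itself, I would verify the marked compensation argument sketched before the proposition. Consider the sum $\sum_{0<T_i\le T}V(T_i-)$. Its expectation equals $T\eps a\,d_\eps(c)$ by the definition of the arrival-Palm mean, since the Palm intensity is $\eps a$. It also equals $\E\int_0^T\lambda_{\eps,c}(t)V(t-)\,dt$, by the predictable compensator in the filtration containing the whole environment path. Nonnegativity of the integrand lets us apply the compensator identity through monotone localization. Stationarity and $V(t-)=V(t)$ for almost every $t$ then give $T\E[\lambda_{\eps,c}(0)V(0)]$.

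The main obstacle is this last point: confirming that the prearrival workload is predictable with respect to a filtration in which the Cox arrivals still have intensity $\lambda_{\eps,c}$. Once that is granted, the rest is algebra. I would handle it as in the balance section: enlarge the filtration by the entire environment path at time zero. Conditionally on the environment, arrivals are Poisson with independent marks, so the compensator is unchanged.
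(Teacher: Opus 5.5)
Your proposal is correct and follows the paper's proof essentially step for step. You substitute \eqref{eq:palm} into \eqref{eq:balance} for bounded service, extend to finite $b_2$ by truncating the marks and applying monotone convergence with $V_n\uparrow V$ and bounded $\lambda$, subtract to get the second line, and add $b_1$ for sojourn times. Your treatment of predictability of $V(t-)$, via the filtration enlarged by the environment path, matches the compensation justification the paper gives for \eqref{eq:balance} and \eqref{eq:palm}.
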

\begin{proof}
For bounded service, substitute \eqref{eq:palm} into \eqref{eq:balance}. For a general finite-second-moment service law, truncate the service marks as in the proof of Theorem~\ref{thm:expansion}. Since $V_n\uparrow V$ and $\lambda$ is bounded, monotone convergence gives convergence of $\E[\lambda V_n]$ to $\E[\lambda V]$. Pass to the limit in the bounded-service identity. Subtraction gives the second formula. Mean sojourn time is $d_\eps(c)+b_1$, and $b_1$ does not change with $c$.
\end{proof}
In Theorem~\ref{thm:64}, the arrival mean is $2\eps$ and $b_1=1$, so
\[
 d_\eps(3\pi)-d_\eps(2\pi)>\frac{\eps}{1200}.
\]
For Theorem~\ref{thm:positive}, $a=3$ and $b_1=1$, giving
\[
 d^+_\eps(3\pi)-d^+_\eps(2\pi)>\frac{\eps}{1500}.
\]
The comparisons in Theorem~\ref{thm:many} transfer in the same way. No ordinary-time/Palm identification through PASTA has been used.

\section{Scope and further questions}
The universal stationary workload ordering in Leskel\"a's Problem~1 fails even with a finite irreducible Markov intensity, strictly positive intensities and transition rates, bounded continuous independent service times, and finite exponential workload moments. Positive autocovariance at every lag does not repair the assertion. Conversely, nonincreasing covariance is sufficient for the second-order unit-service comparison, and the service multiplier of Theorem~\ref{thm:servicecriterion} exactly decides a comparison quantified over all bounded stationary environments.

The distinctions between these conclusions are important. The service criterion controls the second-order coefficient; it neither signs a difference of equal-coefficient remainders nor characterizes the full workload ordering at fixed positive traffic. The reversible-chain corollary applies to each fixed pair of speeds at sufficiently small traffic, not simultaneously to every speed pair under one claimed threshold. We do not settle the deterministic-image question in \cite[Problem~2]{Leskela22}, and do not assert a minimum number of states for a counterexample. These remain separate questions from the explicit disproof and the universal second-order characterization proved here.

\appendix
\section{The two-arrival coefficient}\label{app:pairs}
The unit-service kernel also has a direct two-arrival interpretation, as in classical light-traffic calculations \cite{RS89,Blasz95}. Place the arrivals at ages $y$ and $y+v$, $y,v\ge0$. Their joint workload and isolated-workload sum are
\[
 V_2(y,v)=\max\{0,1-y,2-y-v\},\qquad
 S_2(y,v)=(1-y)_++(1-y-v)_+.
\]
For $0\le v\le1$, their interaction is
\[
 V_2(y,v)-S_2(y,v)=
 \begin{cases}
 y,&0\le y\le1-v,\\
 1-v,&1-v\le y\le1,\\
 2-v-y,&1\le y\le2-v,\\
 0,&y\ge2-v.
 \end{cases}
\]
For $v\ge1$ it vanishes. Integrating over $y$ gives
\[
 \frac{(1-v)^2}{2}+v(1-v)+\frac{(1-v)^2}{2}=1-v
 \qquad(0\le v\le1).
\]
This checks the coefficient normalization. The finite-traffic counterexamples require the uniform remainder in Section~\ref{sec:expansion}, not only this two-point calculation.

\section{Exact arithmetic and reproducibility}\label{app:check}
The accompanying Python script uses rational arithmetic for the displayed numerical comparisons and finite insertion regression tests. No empirical data are used. Table~\ref{tab:constants} records the principal certificates. The trigonometric and exponential bounds and the infinite-history queueing arguments are analytic proofs in the paper; a finite regression test is not their formal verification.
\begin{table}[ht]
\caption{Exact lower bounds used in the counterexamples.}\label{tab:constants}
\centering
\begin{tabular}{@{}ll@{}}
\toprule
Quantity & Rational certificate\\
\midrule
64-state damping upper bound & $10192775/205273088<1/20$\\
Endpoint coefficient gap & $>363599/115776720>1/320$\\
64-state normalized workload gap & $>5879/3500000>1/600$\\
128-state normalized workload gap & $>1663/700000>1/500$\\
Repeated-peak gap, multiplied by $L^2$ & $>275899/21000000>1/100$\\
\bottomrule
\end{tabular}
\end{table}
For numerical evaluation, but not for certifying the signs, one may use
\[
 I_d(c)=\Rea\left(\frac{z-1+e^{-z}}{z^2}\right),\qquad z=c(d-\ii).
\]
The source and checker are available in the public repository
\url{https://github.com/yspennstate/Ross-s-second-queueing-conjecture}.

\section*{Acknowledgments}
AI systems developed the mathematical arguments, calculations, verification code and initial manuscript. The author initiated and organized the project. AI research agents developed the cyclic Markov construction and the uniform workload remainder estimate. A separate Claude-based analysis rederived the expansion and its error bound and obtained the averaged-environment variance formulation. ChatGPT developed the bounded-service and positive-transition-rate extensions; a separate ChatGPT analysis derived the 64-state construction and its endpoint estimate. The present synthesis checked that construction, extended the expansion to finite second service moments, and developed the positive-autocovariance example, the repeated-reversal family and the service-law characterization. It also checked the Palm comparison and prepared the manuscript, bibliography and exact arithmetic tests. This work is part of Math Brain, a project aimed at improving AI systems' ability to solve open problems in mathematics.

The author has reviewed the manuscript and its proofs and accepts full responsibility for the mathematical arguments, proofs and conclusions, including any remaining errors.

\section*{Funding}
The author acknowledges support from the European Research Council under the European Union's Horizon Europe programme (grant agreement No.~101041711), the Simons Foundation through the Collaboration on the Mathematical and Scientific Foundations of Deep Learning, Heights Labs, Convex Nexus Capital, and Israel Science Foundation grants 2258/19 and 4101/25.

\section*{Competing interests}
The author holds equity interests in Heights Labs and Convex Nexus Capital.
\section*{Supplementary code}
The files \texttt{verify.py} and \texttt{checks.json}, available in the public repository cited in Appendix~\ref{app:check}, reproduce the rational comparisons and finite insertion tests. The proofs do not depend on external data or a live service.


\begin{thebibliography}{50}
\bibitem{Asmussen03}
S.~Asmussen (2003).
Applied Probability and Queues.
2nd ed. Applications of Mathematics \textbf{51}. Springer, New York.
\href{https://doi.org/10.1007/b97236}{doi:10.1007/b97236}.

\bibitem{BB03}
F.~Baccelli and P.~Br\'emaud (2003).
Elements of Queueing Theory: Palm Martingale Calculus and Stochastic Recurrences.
2nd ed. Applications of Mathematics \textbf{26}. Springer, Berlin.
\href{https://doi.org/10.1007/978-3-662-11657-9}{doi:10.1007/978-3-662-11657-9}.

\bibitem{BR98}
N.~B\"auerle and T.~Rolski (1998).
A monotonicity result for the workload in Markov-modulated queues.
J. Appl. Probab. \textbf{35} 741--747.

\bibitem{Blasz95}
B.~B{\l}aszczyszyn (1995).
Factorial-moment expansion for stochastic systems.
Stochastic Process. Appl. \textbf{56} 321--335.
\href{https://doi.org/10.1016/0304-4149(94)00071-Z}{doi:10.1016/0304-4149(94)00071-Z}.

\bibitem{BFS95}
B.~B{\l}aszczyszyn, A.~Frey and V.~Schmidt (1995).
Light-traffic approximations for Markov-modulated multi-server queues.
Comm. Statist. Stochastic Models \textbf{11} 423--445.
\href{https://doi.org/10.1080/15326349508807354}{doi:10.1080/15326349508807354}.

\bibitem{BY09}
B.~B{\l}aszczyszyn and D.~Yogeshwaran (2009).
Directionally convex ordering of random measures, shot noise fields, and some applications to wireless communications.
Adv. Appl. Probab. \textbf{41} 623--646.

\bibitem{Bremaud81}
P.~Br\'emaud (1981).
Point Processes and Queues: Martingale Dynamics.
Springer Series in Statistics. Springer, New York.

\bibitem{Brumelle71}
S.~L.~Brumelle (1971).
On the relation between customer and time averages in queues.
J. Appl. Probab. \textbf{8} 508--520.
\href{https://doi.org/10.2307/3212174}{doi:10.2307/3212174}.

\bibitem{CCP91}
C.-S.~Chang, X.~Chao and M.~Pinedo (1991).
Monotonicity results for queues with doubly stochastic Poisson arrivals: Ross's conjecture.
Adv. Appl. Probab. \textbf{23} 210--228.

\bibitem{Cox55}
D.~R.~Cox (1955).
Some statistical methods connected with series of events.
J. Roy. Statist. Soc. Ser. B \textbf{17} 129--157.
\href{https://doi.org/10.1111/j.2517-6161.1955.tb00188.x}{doi:10.1111/j.2517-6161.1955.tb00188.x}.

\bibitem{DR84}
D.~J.~Daley and T.~Rolski (1984).
A light traffic approximation for a single-server queue.
Math. Oper. Res. \textbf{9} 624--628.

\bibitem{DR91}
D.~J.~Daley and T.~Rolski (1991).
Light traffic approximations in queues.
Math. Oper. Res. \textbf{16} 57--71.
\href{https://doi.org/10.1287/moor.16.1.57}{doi:10.1287/moor.16.1.57}.

\bibitem{DR94}
D.~J.~Daley and T.~Rolski (1994).
Light traffic approximations in general stationary single-server queues.
Stochastic Process. Appl. \textbf{49} 141--158.

\bibitem{DVJ03}
D.~J.~Daley and D.~Vere-Jones (2003).
An Introduction to the Theory of Point Processes. Vol.~I: Elementary Theory and Methods.
2nd ed. Springer, New York.
\href{https://doi.org/10.1007/b97277}{doi:10.1007/b97277}.

\bibitem{DVJ08}
D.~J.~Daley and D.~Vere-Jones (2008).
An Introduction to the Theory of Point Processes. Vol.~II: General Theory and Structure.
2nd ed. Springer, New York.
\href{https://doi.org/10.1007/978-0-387-49835-5}{doi:10.1007/978-0-387-49835-5}.

\bibitem{GHSY06}
V.~Gupta, M.~Harchol-Balter, A.~Scheller-Wolf and U.~Yechiali (2006).
Fundamental characteristics of queues with fluctuating load.
In \emph{Proceedings of SIGMETRICS/Performance 2006}. ACM, New York.

\bibitem{HL86}
H.~Heffes and D.~M.~Lucantoni (1986).
A Markov modulated characterization of packetized voice and data traffic and related statistical multiplexer performance.
IEEE J. Sel. Areas Commun. \textbf{4} 856--868.
\href{https://doi.org/10.1109/JSAC.1986.1146393}{doi:10.1109/JSAC.1986.1146393}.

\bibitem{Heyman82}
D.~P.~Heyman (1982).
On Ross's conjectures about queues with non-stationary Poisson arrivals.
J. Appl. Probab. \textbf{19} 245--249.
\href{https://doi.org/10.2307/3213936}{doi:10.2307/3213936}.

\bibitem{HS80}
D.~P.~Heyman and S.~Stidham, Jr. (1980).
The relation between customer and time averages in queues.
Oper. Res. \textbf{28} 983--994.
\href{https://doi.org/10.1287/opre.28.4.983}{doi:10.1287/opre.28.4.983}.

\bibitem{HuPan00}
T.~Hu and X.~Pan (2000).
Comparisons of dependence for stationary Markov processes.
Probab. Engrg. Inform. Sci. \textbf{14} 299--315.

\bibitem{Kallenberg17}
O.~Kallenberg (2017).
Random Measures, Theory and Applications.
Probability Theory and Stochastic Modelling \textbf{77}. Springer, Cham.
\href{https://doi.org/10.1007/978-3-319-41598-7}{doi:10.1007/978-3-319-41598-7}.

\bibitem{Kelly79}
F.~P.~Kelly (1979).
Reversibility and Stochastic Networks.
Wiley, Chichester.

\bibitem{Last14}
G.~Last (2014).
Perturbation analysis of Poisson processes.
Bernoulli \textbf{20} 486--513.
\href{https://doi.org/10.3150/12-BEJ494}{doi:10.3150/12-BEJ494}.

\bibitem{LP18}
G.~Last and M.~Penrose (2018).
Lectures on the Poisson Process.
Institute of Mathematical Statistics Textbooks \textbf{7}. Cambridge Univ. Press, Cambridge.
\href{https://doi.org/10.1017/9781316104477}{doi:10.1017/9781316104477}.

\bibitem{Leskela10}
L.~Leskel\"a (2010).
Stochastic relations of random variables and processes.
J. Theoret. Probab. \textbf{23} 523--546.

\bibitem{Leskela22}
L.~Leskel\"a (2022).
Ross's second conjecture and supermodular stochastic ordering.
Queueing Syst. \textbf{100} 213--215.
\href{https://doi.org/10.1007/s11134-022-09824-0}{doi:10.1007/s11134-022-09824-0}.

\bibitem{LV13}
L.~Leskel\"a and M.~Vihola (2013).
Stochastic order characterization of uniform integrability and tightness.
Statist. Probab. Lett. \textbf{83} 382--389.

\bibitem{LevinPeres17}
D.~A.~Levin and Y.~Peres (2017).
Markov Chains and Mixing Times.
2nd ed., with contributions by E.~L.~Wilmer. Amer. Math. Soc., Providence, RI.

\bibitem{Lindley52}
D.~V.~Lindley (1952).
The theory of queues with a single server.
Proc. Cambridge Philos. Soc. \textbf{48} 277--289.

\bibitem{Loynes62}
R.~M.~Loynes (1962).
The stability of a queue with non-independent inter-arrival and service times.
Proc. Cambridge Philos. Soc. \textbf{58} 497--520.
\href{https://doi.org/10.1017/S0305004100036781}{doi:10.1017/S0305004100036781}.

\bibitem{Massey87}
W.~A.~Massey (1987).
Stochastic orderings for Markov processes on partially ordered spaces.
Math. Oper. Res. \textbf{12} 350--367.

\bibitem{MS93}
L.~E.~Meester and J.~G.~Shanthikumar (1993).
Regularity of stochastic processes: A theory based on directional convexity.
Probab. Engrg. Inform. Sci. \textbf{7} 343--360.

\bibitem{Miyazawa94}
M.~Miyazawa (1994).
Rate conservation laws: A survey.
Queueing Syst. \textbf{15} 1--58.
\href{https://doi.org/10.1007/BF01189231}{doi:10.1007/BF01189231}.

\bibitem{MR04}
N.~Miyoshi and T.~Rolski (2004).
Ross-type conjectures on monotonicity of queues.
Aust. N. Z. J. Stat. \textbf{46} 121--131.
\href{https://doi.org/10.1111/j.1467-842X.2004.00318.x}{doi:10.1111/j.1467-842X.2004.00318.x}.

\bibitem{MullerStoyan02}
A.~M\"uller and D.~Stoyan (2002).
Comparison Methods for Stochastic Models and Risks.
Wiley, Chichester.

\bibitem{Neuts79}
M.~F.~Neuts (1979).
A versatile Markovian point process.
J. Appl. Probab. \textbf{16} 764--779.
\href{https://doi.org/10.2307/3213143}{doi:10.2307/3213143}.

\bibitem{Norris97}
J.~R.~Norris (1997).
Markov Chains.
Cambridge Series in Statistical and Probabilistic Mathematics \textbf{2}. Cambridge Univ. Press, Cambridge.

\bibitem{RS89}
M.~I.~Reiman and B.~Simon (1989).
Open queueing systems in light traffic.
Math. Oper. Res. \textbf{14} 26--59.
\href{https://doi.org/10.1287/moor.14.1.26}{doi:10.1287/moor.14.1.26}.

\bibitem{Rolski81}
T.~Rolski (1981).
Queues with nonstationary input stream: Ross's conjecture.
Adv. Appl. Probab. \textbf{13} 603--618.
\href{https://doi.org/10.2307/1426787}{doi:10.2307/1426787}.

\bibitem{Rolski86}
T.~Rolski (1986).
Upper bounds for single server queues with doubly stochastic Poisson arrivals.
Math. Oper. Res. \textbf{11} 442--450.
\href{https://doi.org/10.1287/moor.11.3.442}{doi:10.1287/moor.11.3.442}.

\bibitem{Ross78}
S.~M.~Ross (1978).
Average delay in queues with non-stationary Poisson arrivals.
J. Appl. Probab. \textbf{15} 602--609.
\href{https://doi.org/10.2307/3213122}{doi:10.2307/3213122}.

\bibitem{SSV12}
R.~L.~Schilling, R.~Song and Z.~Vondra\v{c}ek (2012).
Bernstein Functions: Theory and Applications.
2nd ed. De Gruyter Studies in Mathematics \textbf{37}. De Gruyter, Berlin.
\href{https://doi.org/10.1515/9783110269338}{doi:10.1515/9783110269338}.

\bibitem{ShakedShanthikumar07}
M.~Shaked and J.~G.~Shanthikumar (2007).
Stochastic Orders.
Springer Series in Statistics. Springer, New York.

\bibitem{Strassen65}
V.~Strassen (1965).
The existence of probability measures with given marginals.
Ann. Math. Statist. \textbf{36} 423--439.
\href{https://doi.org/10.1214/aoms/1177700153}{doi:10.1214/aoms/1177700153}.

\bibitem{Tchen80}
A.~H.~Tchen (1980).
Inequalities for distributions with given marginals.
Ann. Probab. \textbf{8} 814--827.
\href{https://doi.org/10.1214/aop/1176994668}{doi:10.1214/aop/1176994668}.

\bibitem{Whitt81}
W.~Whitt (1981).
Comparing counting processes and queues.
Adv. Appl. Probab. \textbf{13} 207--220.

\bibitem{Whitt84}
W.~Whitt (1984).
On approximations for queues, III: Mixtures of exponential distributions.
AT\&T Bell Laboratories Technical Journal \textbf{63} 163--175.

\bibitem{Wolff82}
R.~W.~Wolff (1982).
Poisson arrivals see time averages.
Oper. Res. \textbf{30} 223--231.
\href{https://doi.org/10.1287/opre.30.2.223}{doi:10.1287/opre.30.2.223}.

\bibitem{Yaglom87}
A.~M.~Yaglom (1987).
Correlation Theory of Stationary and Related Random Functions. Vols.~I--II.
Springer, New York.

\bibitem{YN71}
U.~Yechiali and P.~Naor (1971).
Queuing problems with heterogeneous arrivals and service.
Oper. Res. \textbf{19} 722--734.
\href{https://doi.org/10.1287/opre.19.3.722}{doi:10.1287/opre.19.3.722}.
\end{thebibliography}
\end{document}